\documentclass[12pt]{article}
\usepackage{amsmath}
\usepackage{amssymb}
\newlength{\mywidth}

\usepackage{amsmath,amssymb,textcomp,graphicx,amsthm} 
\usepackage{amssymb,verbatim,color}
\usepackage{esint}
\usepackage{tikz}
\usepackage[colorlinks=true,urlcolor=blue, citecolor=red,linkcolor=blue,linktocpage,pdfpagelabels, bookmarksnumbered,bookmarksopen]{hyperref}

\newtheorem{thm}{Theorem}
\newtheorem{lemma}[thm]{Lemma}
\newtheorem{cor}[thm]{Corollary}
\newtheorem{prop}[thm]{Proposition}

\theoremstyle{definition}
\newtheorem{rem}[thm]{Remark}
\newtheorem{ex}{Example}

\DeclareMathOperator{\ddiv}{div}

\DeclareMathOperator*{\osc}{osc}

\DeclareMathOperator{\dist}{dist}
\newcommand{\ba}{\boldsymbol{\alpha}}
\newcommand{\bbeta}{\boldsymbol{\beta}}
\newcommand{\bg}{\boldsymbol{\gamma}}
\newcommand{\bs}{\boldsymbol{\sigma}}
\newcommand{\bo}{\boldsymbol{\omega}}
\newcommand{\bm}{\boldsymbol{\mu}}
\newcommand{\bl}{\boldsymbol{\lambda}}
\newcommand{\R}{\mathbb{R}}
\newcommand{\e}{\varepsilon}
\def\Xint#1{\mathchoice
   {\XXint\displaystyle\textstyle{#1}}%
   {\XXint\textstyle\scriptstyle{#1}}%
   {\XXint\scriptstyle\scriptscriptstyle{#1}}%
   {\XXint\scriptscriptstyle\scriptscriptstyle{#1}}%
   \!\iint}
\def\XXint#1#2#3{{\setbox0=\hbox{$#1{#2#3}{\iint}$}
     \vcenter{\hbox{$#2#3$}}\kern-.51\wd0}}

\def\longminus{\raisebox{-2ex}{\rotatebox[origin=c]{20}{${-}\mkern-3.5mu{-}$}}}
  
\def\fiint{\Xint\longminus}
\begin{document}
\settowidth{\mywidth}{ab}
\title{The Gradient Flow of Infinity-Harmonic Potentials}
\author{Erik Lindgren, Peter Lindqvist}

\date{\today}
\maketitle

\medskip
    {\small \textsc{Abstract:} \textsf{We study the streamlines of $\infty$-harmonic functions in planar convex rings. We include convex polygons.    The points where streamlines can meet are characterized: they lie on certain curves.  
        The gradient has constant norm along streamlines outside the set of meeting points,  the \emph{infinity-ridge}.
}}
      
\bigskip

\tableofcontents 

\bigskip 
\bigskip 

{\small \textsf{AMS Classification 2010}: 49N60, 35J15, 35J60, 35J65, 35J70.} 

{\small \textsf{Keywords}: Infinity-Laplace Equation, streamlines, convex rings, infinity-potential function}
\section{Introduction} 
The $\infty$-Laplace Equation
$$
 \Delta_{\infty}u\,\equiv\,\sum_{i,j}\frac{\partial u}{\partial x_i} \frac{\partial u}{\partial x_j}\frac{\partial^2 u}{\partial x_i \partial x_j}\,=\,0
 $$
was introduced by G. Aronsson in 1967 (cf. \cite{A1}) to produce optimal Lip\-schitz extensions of boundary values. It has been extensively studied. Some of the highlights are
\begin{itemize}
\item Viscosity solutions for $\Delta_{\infty}$, \cite{BDM}
\item Uniqueness, \cite{J}
\item Differentiability, \cite{S}, \cite{ESa} and \cite{ES}
\item Tug-of-War (connection with stochastic game theory), \cite{PSW}
\end{itemize}
We are interested in the two-dimensional equation 
$$
  \Bigl(\frac{\partial u}{\partial x_1}\Bigr)^{\!2}\frac{\partial ^2u}{\partial x_1^2}\,+\,2\, \frac{\partial u}{\partial x_1}\frac{\partial u}{\partial x_2}\frac{\partial ^2u}{\partial x_1 \partial x_2}\,+\, \Bigl(\frac{\partial u}{\partial x_2}\Bigr)^{\!2}\frac{\partial ^2u}{\partial x_2^2}\,=\,0
  $$
  in so-called convex ring domains $G = \Omega\setminus K$. Here $\Omega$ is a bounded convex domain in $\R^2$ and $K\Subset \Omega$ is a closed convex set.
  We continue our investigation in \cite{LL} of the $\infty$-potential $u_\infty$, which is the unique solution in $C(\overline G)$ of the boundary value problem 
$$
    \begin{cases} \Delta_{\infty}u\,=\,0\qquad\text{in}\qquad G\\ \phantom{ \Delta_{\infty}}
      u\,=\,0\qquad\text{on}\qquad \partial \Omega\\\phantom {\Delta_{\infty}}
      u\,=\,1 \qquad\text{on}\qquad \partial K.
    \end{cases}
  $$
In \cite{LL} we proved that the \emph{ascending} streamlines, the solutions $\ba= (\alpha_1,\alpha_2)$ of 
  $$
  \frac{d\ba(t)}{dt} = +\nabla u_\infty (\ba(t)), \quad 0\leq t < T_{\ba}
  $$
  with given initial point $\ba(0)\in \overline \Omega\setminus K$, are unique and terminate at $\partial K$. (The descending ones are not!) Streamlines may meet and then continue along a common arc. Uniqueness prevents crossing streamlines.
  
  Along a streamline one would expect that the speed $|\nabla u_\infty (\ba)|$ is constant. Indeed, 
  $$  
  \frac{d}{dt} |\nabla u_\infty (\ba(t))|^2 = 2\,\Delta_\infty u_\infty (\ba (t)) = 0, 
  $$
  but the calculation requires second derivatives. The main difficulty is the lack of second derivatives. Although, the second derivatives are known to exist  almost everywhere with respect to the Lebesgue area, see \cite{KZZ} for this new result, this is of little use since the area of a streamline is zero. In \cite{LL} it was shown that the above calculation fails: for most streamlines the speed is not constant the whole way up to $\partial K$. (We shall see that the speed is constant from the initial point till the streamline meets another streamline.) 
  
  We use the approximation with the (unique) solution of the $p$-Laplace equation
  $$
\Delta_p u = \ddiv (|\nabla u|^{p-2}\nabla u) = 0, \qquad p > 2.
 $$
 in $G$ with the same boundary values as $u_{\infty}$.

We shall use several facts about these $p$-harmonic functions due to J. Lewis, cf.  \cite{L}. It is decisive that the \emph{level curves} $\{u_p(x)=c\}$ are convex and that $\Delta u_p \leq 0$. See Section \ref{sec:prel} for more details.
  
  We also need the facts that (i) $\nabla u_p\to \nabla u_\infty$ in $L^2_\text{loc}$ and (ii) the family $\{|\nabla u_p|\}$ is locally equicontinuous. (Notice that we wrote $|\nabla u_p|$, not $\nabla u_p$.) We extract a proof of this from the recent pathbreaking work by H. Koch, Y. R-Y. Zhang and Y. Zhou in \cite{KZZ}, complementing  their results by applying a simple device, due to Lebesgue in \cite{Leb}, to the norm $|\nabla u_p|$ of the quasiregular mapping 
  $$
  \frac{\partial u_p}{\partial x_1}-  i\frac{\partial u_p}{\partial x_2}, \quad i^2=-1.
  $$The quasiregularity was obtained by B. Bojarski and T. Iwaniec in \cite{BI}.

We prove the following basic result in Section \ref{sec:eqcont}.
  
    \begin{thm}[Non-decreasing speed]  \label{thm:speed} Let $\ba_\infty = \ba_\infty(t)$, $0\leq t\leq T$, be a streamline of $u_\infty$, i.e., 
  $$
  \frac{d\ba_\infty(t)}{dt} = \nabla u_\infty (\ba_\infty(t)), \quad 0\leq t< T, 
  $$
  and $\ba_\infty(0)\in \partial \Omega$, $\ba_\infty(T)\in \partial K$. Then the function $u_\infty(\ba_\infty(t))$ is  convex when  $0\leq t\leq T$. In particular, the speed $|\nabla u_\infty (\ba_\infty(t))|$, is a non-decreasing function of $t$.
  \end{thm}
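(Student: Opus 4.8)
The plan is to establish the statement first for the $p$-harmonic approximations $u_p$ and then to let $p\to\infty$. Note that convexity of $t\mapsto u_\infty(\ba_\infty(t))$ is the sharper assertion: since $u_\infty$ is differentiable (indeed $C^1$ in the plane, by the references in the introduction), the chain rule along the Lipschitz curve $\ba_\infty$ gives
$$
\frac{d}{dt}\,u_\infty(\ba_\infty(t)) = \nabla u_\infty(\ba_\infty(t))\cdot\ba_\infty'(t) = |\nabla u_\infty(\ba_\infty(t))|^{2},
$$
so that $u_\infty(\ba_\infty(\cdot))$ is convex precisely when its derivative $|\nabla u_\infty(\ba_\infty(\cdot))|^{2}$, i.e.\ the squared speed, is non-decreasing. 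Hence it is enough to prove convexity.

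Fix $p>2$. By the facts recalled in Section \ref{sec:prel} (due to Lewis) one has $u_p\in C^\infty(G)$, $\nabla u_p\neq0$ everywhere in $G$, and $\Delta u_p\le0$. Writing $\Delta_pu_p=0$ in non-divergence form,
$$
0=|\nabla u_p|^{p-2}\Delta u_p + (p-2)\,|\nabla u_p|^{p-4}\,\Delta_\infty u_p ,
$$
we obtain $\Delta_\infty u_p=-\tfrac{1}{p-2}\,|\nabla u_p|^{2}\,\Delta u_p\ge0$ in $G$. Let $\ba_p$ be the streamline of $u_p$, $\ba_p'(t)=\nabla u_p(\ba_p(t))$, issuing from an interior point $x_p\in G$ with $x_p\to\ba_\infty(0)$; since $u_p>0$ increases along the flow, $\ba_p$ stays in $G$, where $u_p$ is smooth. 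Differentiating twice,
$$
\frac{d}{dt}\,u_p(\ba_p(t)) = |\nabla u_p(\ba_p(t))|^{2},\qquad \frac{d^{2}}{dt^{2}}\,u_p(\ba_p(t)) = \frac{d}{dt}\,|\nabla u_p(\ba_p(t))|^{2} = 2\,\Delta_\infty u_p(\ba_p(t)) \ge 0 .
$$
So $t\mapsto u_p(\ba_p(t))$ is convex on the interior of its interval of definition, and by continuity of $u_p$ on $\overline G$ it is convex up to the endpoints as well.

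To pass to the limit, pick $p_j\to\infty$ so that $u_{p_j}\to u_\infty$ uniformly on $\overline G$ (standard for the $p$-Laplacian) and so that the streamlines $\ba_{p_j}$ converge uniformly to $\ba_\infty$ on $[0,T]$. The latter comes from the Arzel\`a--Ascoli theorem — the $\ba_p$ are equi-Lipschitz since $|\ba_p'|=|\nabla u_p|$ is uniformly bounded by the estimates of Sections \ref{sec:prel} and \ref{sec:eqcont} — combined with the uniqueness of the ascending streamline of $u_\infty$ proved in \cite{LL}, which forces every uniform limit of the $\ba_p$ to equal $\ba_\infty$. Then $u_{p_j}(\ba_{p_j}(t))\to u_\infty(\ba_\infty(t))$ for every $t\in[0,T]$, and a pointwise limit of convex functions is convex. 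Hence $u_\infty(\ba_\infty(\cdot))$ is convex on $[0,T]$ and the speed $|\nabla u_\infty(\ba_\infty(t))|$ is non-decreasing.

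I expect the delicate step to be the convergence of the streamlines just invoked: the flows are driven by the full vector field $\nabla u_p$, whereas one only controls $|\nabla u_p|$ uniformly, and $\nabla u_p\to\nabla u_\infty$ merely in $L^2_{\mathrm{loc}}$ — a convergence that carries no information along a streamline, which has zero Lebesgue area. Making this rigorous (or importing it from \cite{LL}) is the real content; the remaining ingredients are the elementary identity $\tfrac{d^{2}}{dt^{2}}u_p(\ba_p)=2\,\Delta_\infty u_p\ge0$ and the stability of convexity under pointwise limits.
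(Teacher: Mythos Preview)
Your strategy is exactly the paper's: show $F_p(t)=u_p(\ba_p(t))$ is convex via $F_p''=2\,\Delta_\infty u_p=-\tfrac{2}{p-2}|\nabla u_p|^2\Delta u_p\ge0$, then pass to the limit and use that pointwise limits of convex functions are convex.

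You have correctly located the gap, and it is genuine: uniqueness of the ascending $\infty$-streamline does \emph{not} by itself force the Ascoli limit $\ba$ of the $\ba_p$ to equal $\ba_\infty$. Uniqueness only says that a solution of $\ba'=\nabla u_\infty(\ba)$ with given initial point is unique; one must first prove that the limit curve $\ba$ \emph{is} such a solution, and this identification is not available from \cite{LL}. The paper carries it out here, and the essential analytic input is the local equicontinuity of $|\nabla u_p|$ (Theorem~\ref{thm:eqcont}, extracted from \cite{KZZ} via Lebesgue's oscillation lemma applied to the quasiregular complex gradient). From that one gets $|\nabla u_p(\ba_p(t))|\to|\nabla u_\infty(\ba(t))|$ locally uniformly, hence $F_p'\to|\nabla u_\infty(\ba)|^2$ and therefore $F'(t)=|\nabla u_\infty(\ba(t))|^2$. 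Comparing with the chain-rule expression $F'(t)=\langle\nabla u_\infty(\ba),\ba'\rangle$ and the bound $|\ba'|\le|\nabla u_\infty(\ba)|$ (the limit of $|\ba_p'|=|\nabla u_p(\ba_p)|$), the equality case of Cauchy--Schwarz forces $\ba'=\nabla u_\infty(\ba)$ a.e., and only then does uniqueness give $\ba=\ba_\infty$. Without the equicontinuity of $|\nabla u_p|$ your limiting step does not close; the $L^2_{\mathrm{loc}}$ convergence of $\nabla u_p$ alone is, as you note, useless along a null set.
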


   Combining this with a result in the opposite direction (cf. Lemma 12 in \cite{LL}), we can control the meeting points so that these lie on a few specific streamlines, here called attracting streamlines.
    
  \paragraph{Polygons.} To avoid a complicated description, we begin with a convex polygon as $\Omega$ with $N$ vertices $P_1,P_2,\ldots, P_N$ (set $P_{N+1}=P_1$ for convenience). With $P_k = \bg_k(0)$ as initial point there is a unique streamline
  $$
  \bg_k = \bg_k(t), \quad  0\leq t \leq T_k,
  $$
  with terminal point $\bg_k(T_k)$ on $\partial K$. The $$\text{\emph{attracting streamlines} are}\qquad \bg_1, \bg_2, \ldots, \bg_N.$$ 
  Occasionally, some of them meet and then share a common arc up to $\partial K$. The collection of all the points on the attracting streamlines is called the $\infty$-\emph{ridge} and is denoted by $\Gamma$, i.e., 
  $$
  \Gamma = \bigcup_{k=1}^N \{\bg_k(t): \,\, 0\leq t\leq T_k\}.
  $$
It seems to play a similar role for the $\infty$-Laplace Equation as the (ordinary) ridge does for the Eikonal Equation. 

Before meeting any other streamline, a streamline $\ba$ either meets an attracting streamline or hits the upper boundary $\partial K$. We formulate this as a theorem,  proved in Section \ref{sec:poly}.
\begin{thm}\label{thm:mainpoly} The speed  $|\nabla u_\infty(\ba(t))|$ is constant along the streamline $\ba$ from the initial point on $\partial \Omega$ until it meets one of the attracting streamlines $\bg_k$, after which the speed is non-decreasing. It cannot meet any other streamline before it meets an attracting one. 
\end{thm}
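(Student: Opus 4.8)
The plan is to separate the statement into two pieces and prove each: \textbf{(A)} that $t\mapsto |\nabla u_\infty(\ba(t))|$ is constant from $t=0$ up to the first instant $t_0$ at which $\ba$ meets \emph{any} other streamline (with $t_0=T$ if no such meeting occurs); and \textbf{(B)} that the point $z:=\ba(t_0)$ lies either on $\partial K$ or on one of the attracting streamlines $\bg_1,\dots,\bg_N$. Granting these, the theorem is immediate: by (B) the first streamline that $\ba$ can meet is some $\bg_k$ (or else $\ba$ simply reaches $\partial K$ without meeting anything), so $\ba$ meets no other streamline before an attracting one; (A) gives the constant speed up to that moment; and beyond $z$ the streamline $\ba$ coincides with the continuation of $\bg_k$, along which the speed is non-decreasing by Theorem~\ref{thm:speed}. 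For (A) one only has to combine Theorem~\ref{thm:speed} -- which says the speed is non-decreasing along $\ba$ -- with the reverse, ``opposite-direction'' monotonicity of Lemma~12 in \cite{LL}, which gives that the speed is non-increasing along $\ba$ before its first meeting; the two together force constancy on $[0,t_0]$.

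For (B) I would run a trapping argument in the plane. Assume $z\notin\partial K$ and pick a streamline $\bbeta$, issued from $b_0\in\partial\Omega$ with $b_0\neq a_0:=\ba(0)$, that meets $\ba$ at $z$. Since $\ba$ meets nothing before $t_0$, the arcs $\ba([0,t_0])$ and $\bbeta([0,t_0])$ intersect only at $z$; together with one of the two boundary arcs of $\partial\Omega$ joining $a_0$ and $b_0$ they bound a Jordan region, and of the two choices one encloses no point of $K$ -- call it $D$, so $\overline D\cap\partial K=\emptyset$. Now every streamline started from a point of the relatively open boundary arc $A\subset\partial\Omega$ forming part of $\partial D$ is trapped in $\overline D$: by uniqueness of the ascending streamlines it cannot cross $\ba$ or $\bbeta$, and it enters $D$ through $A$; since it must nonetheless terminate on $\partial K$ while $\overline D\cap\partial K=\emptyset$, it has no way to leave $D$ except through the one remaining boundary point $z$ (merging into $\ba$ before $z$ would contradict the minimality of $t_0$, and merging into $\bbeta$ still carries it on to $z$). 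Hence every such streamline passes through $z$.

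If the arc $A$ contains a vertex $P_k$ of $\Omega$, this applies to $\bg_k$ itself: $\bg_k$ starts at $P_k\in\overline D$, stays in $\overline D$, and therefore runs through $z$, so $z\in\Gamma$ and (B) is proved. The remaining case -- and, in my view, the only genuinely hard point -- is to exclude that $A$ lies in the relative interior of a single edge $e_k=[P_k,P_{k+1}]$ of $\Omega$. In that situation the trapping observation forces the \emph{whole one-parameter family} of streamlines emanating from the open segment $A\subset e_k$ to focus at the single point $z\notin\partial K$, and this has to be shown impossible; equivalently, one must show that inside the cell bounded by the two attracting streamlines $\bg_k,\bg_{k+1}$ over $e_k$ the level curves of $u_\infty$ carry no corners, so that their orthogonal trajectories (the streamlines) foliate the cell without merging.

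The inputs I would marshal for this last step are: the constancy of the speed along each such streamline up to $z$ from part (A), which pins down the value of $|\nabla u_\infty|$ along the starting segment $A$; the fact that on the flat edge $e_k$ the gradient $\nabla u_\infty$ is normal to $e_k$; the convexity of the level curves together with the inequality $\Delta u_p\le 0$ (Lewis, \cite{L}) and the local equicontinuity of $\{|\nabla u_p|\}$ extracted from \cite{KZZ}; and the $p$-approximation itself, since for finite $p$ the flow lines of $u_p$ genuinely foliate $\overline\Omega\setminus K$ (because $\nabla u_p\neq0$ there) and the merges of the $u_\infty$-streamlines can appear only in the limit $p\to\infty$, where $\nabla u_p\to\nabla u_\infty$ in $L^2_{\text{loc}}$ should allow one to locate them. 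Making this ``no contraction to an interior point of the cell'' step rigorous is where the real work lies; the topological reduction above and the final assembly are routine.
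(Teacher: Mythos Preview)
Your reduction to (A) and (B) is reasonable, but the proof is incomplete precisely where you say it is: the case in (B) where the trapped boundary arc $A$ lies in the interior of a single edge $e_k$ is left open, and the ingredients you list do not by themselves exclude the focusing of a one-parameter family of streamlines at an interior point $z$. The paper does not pursue your topological trapping argument at all; instead it supplies the missing analytic fact. The key step is Lemma~\ref{lem:polymon}: on every edge $\overline{P_jP_{j+1}}$ the normal derivative $|\nabla u_\infty|$ is \emph{monotone} on each of the two half-edges $\overline{P_jM_j}$ and $\overline{M_jP_{j+1}}$, where $M_j$ is a point of maximal speed on that edge. The proof is a short geometric argument using only the convexity of the level sets $\{u_\infty=c\}$: place the edge on the $x_1$-axis, note that the height of the level curve over the edge is unimodal by convexity, and pass to the limit $c\to 0$ in the vertical difference quotients.

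With Lemma~\ref{lem:polymon} in hand, the theorem follows at once from the Quadrilateral and Triangular Rules (Corollaries~\ref{cor:constant} and~\ref{cor:triconstant}): the cell bounded by $\bg_j$, $\bg_{j+1}$, the edge $\overline{P_jP_{j+1}}$ and possibly an arc of $\partial K$ has $|\nabla u_\infty|$ monotone on each half of its base, so those rules give directly that no two streamlines meet inside the cell and that every streamline has constant speed until it reaches $\bg_j$, $\bg_{j+1}$ or $\partial K$. This establishes your (A) and (B) simultaneously, and it is exactly the half-edge monotonicity that dissolves your ``hard case''. A smaller point: your derivation of (A) from Lemma~12 of \cite{LL} alone is also too quick, since that lemma compares \emph{maxima} of $|\nabla u_\infty|$ over two level arcs of a quadrilateral, not the speed along a single streamline; turning it into non-increase along $\ba$ before its first meeting requires an auxiliary nearby streamline and a limiting argument, whereas the paper's route via Lemma~\ref{lem:polymon} and the Quadrilateral Rule handles this in one stroke.
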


Thus there are no meeting points in $G\setminus \Gamma$, i.e., they all lie on the attracting streamlines $\bg_1, \bg_2, \ldots, \bg_N$. In other words, there is no branching outside the $\infty$-ridge $\Gamma$.   

\paragraph{General Domains.}  The polygon has a piecewise smooth boundary and at the vertices $|\nabla u_\infty(P_k)|= 0$. Thus the attracting streamlines start at the points of minimal speed. Similar results hold when $\Omega$ is no longer a polygon, but now we have to assume that the following holds:

\bigskip 
\noindent\textbf{Assumptions:}
\begin{enumerate}
\item\emph{$\nabla u_\infty$ is continuous in $\overline \Omega\setminus K$, in particular along $\partial \Omega$}.\footnote{For example, if $\partial \Omega$ is piecewise $C^2$, then the gradient is continuous in $\overline \Omega\setminus K$, see Section \ref{sec:prel}.}
\item \emph{On $\partial \Omega$, the continuous function $|\nabla u_\infty|$ has a finite number of local minimum points, say $P_1,P_2,\ldots, P_N$, and a finite number of local maximum points.}
\end{enumerate}
 
  Again, the streamlines with the initial points $P_k$ are called \emph{attracting streamlines}:
   $$
  \bg_k = \bg_k(t), \quad  0\leq t\leq T_k ; \quad \bg_k(0)=P_k.
  $$
  The $\infty$-\emph{ridge} is again
  $$
  \Gamma = \bigcup_{k=1}^N \{\bg_k(t): \,\, 0\leq t\leq T_k\}.
  $$
  Theorem \ref{thm:mainpoly} holds also in this setting. As a consequence, streamlines cannot meet, except on $\Gamma$. The theorem below is proved in Section \ref{sec:general}.
  \begin{thm}\label{thm:general} The speed  $|\nabla u_\infty(\ba(t))|$ is constant along a streamline $\ba$ from the initial point on $\partial \Omega$ until it meets one of the attracting streamlines $\bg_k$. It cannot meet any other streamline before it meets an attracting one.
  \end{thm}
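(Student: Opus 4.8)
The plan is to follow the proof of Theorem~\ref{thm:mainpoly}, with Assumptions~1 and~2 taking over the role played there by the special geometry of the polygon, and to reduce the whole statement to the single claim: (B) the first streamline that $\ba$ meets at an interior point of $G$ is one of the attracting streamlines $\bg_k$. Granting (B), I would argue as follows. Let $t^{*}$ be the time of that first meeting (set $t^{*}=T$ if $\ba$ meets no other streamline inside $G$, in which case everything below is trivial). On $[0,t^{*})$ the streamline $\ba$ is \emph{free}, meaning it has met no other streamline, so $t\mapsto|\nabla u_\infty(\ba(t))|$ is non-increasing there by the monotonicity ``in the opposite direction'' (Lemma~12 in \cite{LL}), while it is non-decreasing on all of $[0,T]$ by Theorem~\ref{thm:speed}; hence it is constant on $[0,t^{*}]$, which is the assertion.

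To prove (B) I would first record the structure coming from the attracting streamlines. Each $\bg_k$ is itself a streamline, so by uniqueness of ascending streamlines no two of the $\bg_k$ cross and no other streamline crosses any $\bg_k$. Hence $\Gamma$ divides $\overline\Omega\setminus K$ into finitely many open regions, each bounded by arcs of one or two of the $\bg_k$ and by a single arc of $\partial\Omega$ whose endpoints lie among $P_1,\dots,P_N$ and whose interior contains none of the $P_k$. A streamline $\ba$ with $\ba(0)\in\partial\Omega\setminus\{P_1,\dots,P_N\}$ starts in one such region $R$ and, as long as it has not touched $\Gamma$, stays in $\overline R$: it cannot cross the bounding attracting streamlines, and it cannot run back to $\partial\Omega$ because $u_\infty$ increases along it.

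Next I would suppose, for contradiction, that $\ba$ meets a streamline $\tilde{\ba}$ at an interior point $q\in G$ before touching $\Gamma$. Since neither can cross an attracting streamline, $\ba$ and $\tilde{\ba}$ lie in the closure of one and the same region $R$, whose boundary arc $A\subset\partial\Omega$ has no $P_k$ in its interior. Among all such offending meetings I would pick one with $u_\infty(q)$ as small as possible --- a compactness argument, where Assumption~1 prevents accumulation of meeting points on $\partial\Omega$ --- so that $q$ is the first interior meeting point of both $\ba$ and $\tilde{\ba}$ and no streamline from $A$ merges with $\ba$ or $\tilde{\ba}$ below the level $u_\infty(q)$. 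The arcs of $\ba$ and $\tilde{\ba}$ up to $q$ meet only at $q$ and, together with the sub-arc $A'\subset A$ joining their starting points, bound a Jordan domain $D$; on $\partial D$ one has $u_\infty\le u_\infty(q)$, so by the maximum principle $u_\infty\le u_\infty(q)<1$ on $\overline D$, and in particular $\overline D\cap\partial K=\emptyset$. For any $R$ in the interior of $A'$, the ascending streamline from $R$ is trapped in $\overline D$, cannot cross $\ba$ or $\tilde{\ba}$, cannot return to $\partial\Omega$, and cannot end at $\partial K$ inside $D$; hence it must leave $D$ through the vertex $q$, and by the minimal choice of $q$ it does so as a free streamline from $R$ to $q$. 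The constancy of speed along free streamlines (Theorem~\ref{thm:speed} and Lemma~12 of \cite{LL}, exactly as above) then gives $|\nabla u_\infty(R)|=|\nabla u_\infty(q)|$ for every such $R$, so $|\nabla u_\infty|$ is constant on $A'$; then every point of $A'$ is a local minimum point of the continuous function $|\nabla u_\infty|$ on $\partial\Omega$, contradicting Assumption~2. Therefore $\ba$ meets no streamline before it reaches $\Gamma$, i.e.\ before it meets an attracting streamline, which is (B).

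The analytic content enters only as a black box: Theorem~\ref{thm:speed}, the opposite monotonicity of Lemma~12 in \cite{LL}, and the uniqueness of ascending streamlines from \cite{LL}. The main obstacle I anticipate is the planar-topological bookkeeping for streamlines, carried out with essentially no regularity beyond continuity of $\nabla u_\infty$: there is no flow-box theorem available, so the facts that two distinct ascending streamlines bound a Jordan domain, that a streamline is trapped in $\overline D$ and must escape through $q$, and that an offending meeting of smallest level can be selected, all have to be deduced from uniqueness and the maximum principle. The other delicate point is the final step: excluding that $|\nabla u_\infty|$ is constant on a boundary arc is exactly what Assumption~2 is designed for, and it replaces the hands-on argument near the edges that was available for the polygon in Theorem~\ref{thm:mainpoly}.
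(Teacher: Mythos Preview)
The central gap is your use of Lemma~12 from \cite{LL}. You invoke it twice as saying that $t\mapsto|\nabla u_\infty(\ba(t))|$ is \emph{non-increasing} along a \emph{free} streamline: once in the first paragraph to get constancy on $[0,t^{*}]$ from (B), and again in step~6 to obtain $|\nabla u_\infty(R)|=|\nabla u_\infty(q)|$ for every $R\in A'$. But Lemma~12 (restated in this paper as Lemma~\ref{lem:ll}) is not a pointwise statement along a single streamline. It compares \emph{maxima} of $|\nabla u_\infty|$ over the two level arcs of a curved quadrilateral bounded by two non-meeting streamlines; it yields a reverse inequality for a given streamline only when that streamline passes through the point where the lower-arc maximum is attained (Lemma~\ref{lem:quad}, Corollary~\ref{cor:tri}). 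Nothing in \cite{LL} or in this paper asserts that freeness alone forces the speed to be non-increasing, and proving such a statement would require building thin quadrilaterals around $\ba$ and controlling where the level-arc maxima sit --- essentially the work of Section~5.

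The paper's own argument is structurally different and much shorter: it applies the Quadrilateral Rule (Corollary~\ref{cor:constant}) or the Triangular Rule (Corollary~\ref{cor:triconstant}) directly to the region bounded by $\bg_j$, $\bg_{j+1}$ and the boundary arc $\overline{P_jP_{j+1}}$. Assumption~2 makes $|\nabla u_\infty|$ unimodal on $\overline{P_jP_{j+1}}$, hence monotone on each of $\overline{P_jM_j}$ and $\overline{M_jP_{j+1}}$, and the Rules deliver both conclusions of the theorem at once; no minimal-level contradiction is needed. If you wish to keep your contradiction scheme, step~6 can be repaired: by your minimality choice the streamlines from any $R_1,R_2$ interior to $A'$ do not meet below level $u_\infty(q)$, so Corollary~\ref{cor:tri} applied to the triangle $R_1R_2q$ gives $\max_{\overline{R_1R_2}}|\nabla u_\infty|=|\nabla u_\infty(q)|$ for \emph{every} sub-arc, forcing $|\nabla u_\infty|\equiv|\nabla u_\infty(q)|$ on $A'$. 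Your first paragraph, however, cannot be fixed the same way without essentially reproducing the Quadrilateral Rule for the region between $\bg_j$ and $\bg_{j+1}$ --- at which point the detour through (B) has bought nothing.
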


  The situation when $|\nabla u_{\infty}|$ is constant on some arc on $\partial \Omega$ can happen even for a rectangle, but does not cause extra complications. 

  \begin{prop} If the speed $|\nabla u_{\infty}|$ is constant along a boundary arc $\overline{ab}$, then the streamlines with initial points on the arc are non-intersecting segments of straight lines. They meet no other streamlines in $G$, except possibly when the initial point is $a$ or $b$.
  \end{prop}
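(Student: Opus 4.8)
The plan is to show that in a neighbourhood of the relative interior of $\overline{ab}$ the potential $u_\infty$ equals the cone $\lambda\,\dist(\cdot,\overline{ab})$, where $\lambda$ denotes the constant value of $|\nabla u_\infty|$ on $\overline{ab}$; its streamlines there are the straight inner normal segments, and the theorems already established rule out any branching before they reach $\partial K$. Fix $x_0$ in the relative interior of $\overline{ab}$ and let $\ba$ be its streamline. By Assumption~1 and $u_\infty\equiv 0$ on $\partial\Omega$ the gradient there is normal, $\nabla u_\infty(x_0)=\lambda\,\nu(x_0)$ with $\nu$ the inner unit normal.

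First, by Theorem~\ref{thm:general} the speed $|\nabla u_\infty(\ba(t))|$ is constant and equal to $\lambda$, and $\ba$ meets no other streamline, on the interval $[0,T^\ast)$ lasting until $\ba$ first meets an attracting streamline (with $T^\ast=T$ if it never does); hence $u_\infty(\ba(t))=\lambda^2 t$ there, since $\tfrac{d}{dt}u_\infty(\ba(t))=|\nabla u_\infty(\ba(t))|^2$. Secondly --- and here is where $x_0$ being \emph{interior} to the arc is used --- the streamlines issuing from a short sub-arc of $\overline{ab}$ about $x_0$ all have speed $\lambda$, are mutually non-intersecting by Theorem~\ref{thm:general}, and, starting far from every $P_j$ and from $\partial K$, meet no attracting streamline during a short initial time; by uniqueness of ascending streamlines and their continuous dependence on the initial point \cite{LL}, they sweep out a relatively open set $R\subset\overline\Omega\setminus K$ with $x_0\in R$ on which $|\nabla u_\infty|\equiv\lambda$. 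Thus $u_\infty$ is $\lambda$-Lipschitz on $R$, and after shrinking $R$ every $y\in R$ has a unique nearest point on $\overline{ab}$, namely the foot of its streamline.

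The straightness of $\ba$ then follows from a sharp chain of inequalities: for $y=\ba(t)$ with the segment $[x_0,y]\subset R$,
\[
\lambda^2 t\;=\;u_\infty(y)-u_\infty(x_0)\;\le\;\lambda\,|y-x_0|\;\le\;\lambda\!\int_0^{t}|\dot\ba(s)|\,ds\;=\;\lambda^2 t ,
\]
so equality holds throughout: $|y-x_0|=\lambda t$, $\dot\ba\equiv\lambda\,\nu(x_0)$, hence $\ba(t)=x_0+\lambda t\,\nu(x_0)$ and $u_\infty(y)=\lambda\,|y-x_0|=\lambda\,\dist(y,\overline{ab})$. Repeating this at interior points of $R$ (where genuine balls lie in $R$) propagates straightness, so $\ba$ is a single straight segment on $[0,T^\ast)$ and $u_\infty=\lambda\,\dist(\cdot,\overline{ab})$ in a neighbourhood of it. If $T^\ast=T$ the streamline reaches $\partial K$ as a straight segment having met nothing. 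Finally, two such segments from distinct points of $\overline{ab}$ cannot intersect: they cannot cross \cite{LL}, and were they tangent they would be collinear, hence would issue in opposite inner-normal directions from the two ends of a chord of $\Omega$, each terminating on $\partial K$ (which then separates them) before reaching the other.

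The one step I expect to be genuinely delicate --- and which accounts for the exception when the initial point is $a$ or $b$ --- is excluding $T^\ast<T$ for interior $x_0$, i.e.\ that $\ba$ runs into an attracting streamline $\bg_k$ at $q=\ba(T^\ast)$. As streamlines cannot cross, $\ba$ and $\bg_k$ must be tangent at $q$ and coincide afterwards, so $\bg_k$ reaches $q$ along the line $x_0+\R\,\nu(x_0)$; along $\bg_k$ the speed is non-decreasing by Theorem~\ref{thm:speed} and equals $\lambda$ at $q$, while on the $\ba$-side of $q$ we already have $u_\infty=\lambda\,\dist(\cdot,\overline{ab})$. The contradiction should come from combining these with the convexity of $\Omega$ and the position of $\partial K$ --- treating the cases $|\nabla u_\infty(P_k)|<\lambda$ and $|\nabla u_\infty(P_k)|=\lambda$ separately, and invoking the opposite-direction estimate (Lemma~12 of \cite{LL}) in the first --- the only escape being $x_0\in\{a,b\}$, where $\dist(\cdot,\overline{ab})$ develops an edge. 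The remaining, more routine, care is needed in the flow-box construction (openness of $R$) and in the interior propagation of straightness.
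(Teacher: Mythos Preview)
Your argument leans on Theorem~\ref{thm:general} to obtain constant speed along $\ba$ up to $T^\ast$ and non-meeting of the streamlines issuing from the sub-arc. But Theorem~\ref{thm:general} is stated and proved under Assumption~2 (finitely many local extrema of $|\nabla u_\infty|$ on $\partial\Omega$), and the hypothesis of the Proposition contradicts that assumption: if $|\nabla u_\infty|$ is constant on a nondegenerate boundary arc, every interior point is at once a local minimum and a local maximum. In the paper the Proposition is exactly the device that permits Assumption~2 to be relaxed to Assumption~$2^\ast$ and the attracting streamlines to be redefined as those from $a$ and $b$; invoking Theorem~\ref{thm:general} here is circular. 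You also acknowledge that the step $T^\ast=T$ is left open, and your outline via a case split on $|\nabla u_\infty(P_k)|$ together with Lemma~12 of \cite{LL} is not carried through.

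The paper avoids both problems by never referring to the global $\bg_k$. It takes the two streamlines $\ba,\bbeta$ issuing from the \emph{endpoints} $a,b$ of the constant-speed arc and applies Lemma~\ref{lem:quad} to the quadrilateral they bound with $\overline{ab}$ and any upper level curve $\bo$: since every point of $\overline{ab}$ realises the maximum $\lambda$, every streamline through the figure has constant speed $\lambda$, so $|\nabla u_\infty|\equiv\lambda$ throughout (this is Lemma~\ref{lem:constant}). Now $u_\infty\in C^1$ solves the Eikonal equation there, and Aronsson's lemma (Proposition~\ref{prop:eikonal}) yields non-intersecting straight streamlines at once --- your sharp-inequality chain and the ``propagation of straightness'' amount to a handmade special case of that step. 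Finally Lemma~\ref{lem:trilem} shows that $\ba$ and $\bbeta$ can meet only on $\partial K$, so the Eikonal region reaches all the way to $K$; a streamline from outside can join $\ba$ or $\bbeta$ but cannot enter the interior, which gives the exception for the initial points $a$ and $b$.
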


  This follows from Lemma \ref{lem:constant} and Lemma \ref{lem:trilem}. It  allows us to relax assumption 2 to include boundary arcs with constant local maximum speed:
  \begin{itemize}
  \item[2*.]
  \emph{The local maxima and minima of  $|\nabla u_{\infty}|$   on $\partial \Omega$ are attained along at most finitely many closed subarcs, which may degenerate  to points.}
  \end{itemize}
  
  The definition of the attracting streamlines must be amended if the speed attains a local minimum along a boundary arc $\overline{ab}$: it contributes with \emph{two} attracting streamlines, namely the ones with initial points at $a$ and $b$.

 \begin{rem}  The behavior of the streamlines suggests that the $\infty$-potential is smooth outside the $\infty$-ridge $\Gamma$.
  \end{rem}
  
  \paragraph{Examples.}
  We mention some examples.
\begin{figure}[h!]
\begin{center}
\includegraphics[scale=0.05]{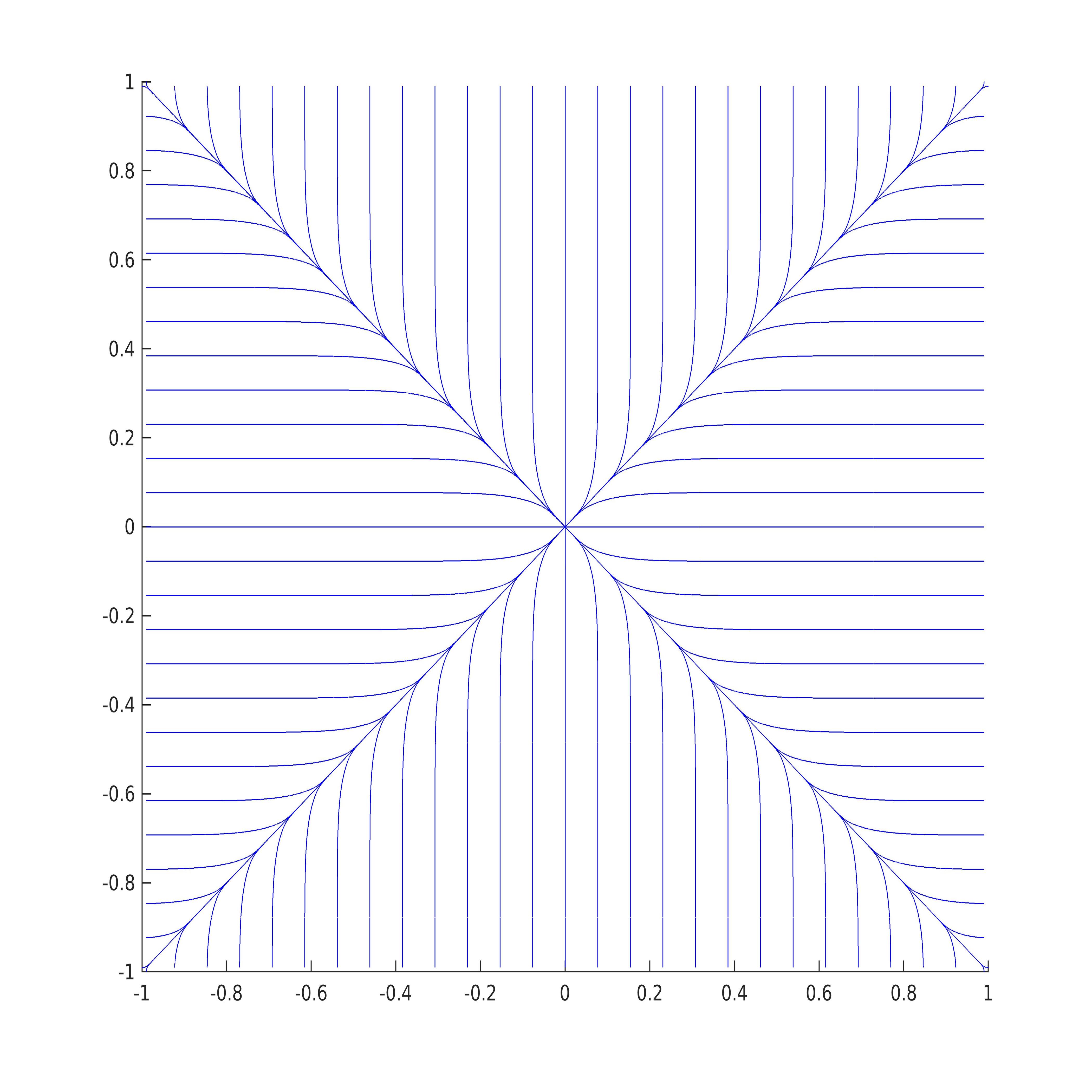}
\caption{The streamlines of $u_\infty$ when $\Omega$ is the square in Example \ref{ex:square}.}
\label{fig:square}
\end{center}
\end{figure}

\begin{ex}\label{ex:square}
Let $\Omega$ be the square
$$
-1<x_1<1, \quad -1<x_2<1, \quad
$$
and $K$ the origin. The attracting streamlines are the four half-diagonals, constituting the $\infty$-ridge
$$
\Gamma = \{(x_1,x_2): \quad x_1=\pm x_2,  |x_1|\leq 1,  |x_2|\leq 1\}.
$$
All streamlines meet at a diagonal, except the four segments along the coordinate axes. See Figure \ref{fig:square}.
\end{ex}

\begin{ex}\label{ex:modsq} Let $K$ be the origin and $\Omega$ the square in Example \ref{ex:square} which is truncated in the following symmetric way: in the south west corner we have removed the triangle with corners $(-1,-1), (-1+\delta,-1)$ and $(-1,-1+\delta)$, for some small $\delta$. See Figure \ref{fig:modsq}. We only describe the behavior in the south west quarter of $\Omega$.
	
The attracting streamlines are those starting in $(-1+\delta,-1)$ and $(-1,-1+\delta)$ (in blue). The only streamlines that do not meet any other before reaching origin, are the medians (in red). Any other streamline will meet one of the attracting streamlines. The streamline starting in the middle of $(-1+\delta,-1)$ and $(-1,-1+\delta)$ (in red) will be a straight line to the origin and will be joined by the attracting streamlines from both sides before terminating at the origin.

\end{ex}

 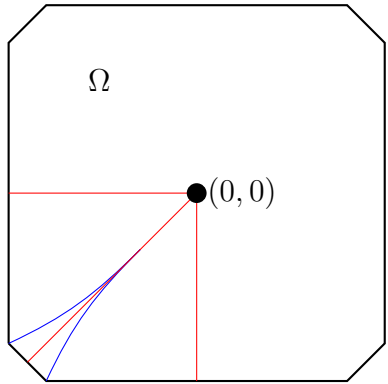
\begin{figure}[h!]
  \begin{center}
 
	\begin{tikzpicture}[domain=-3:3,scale=2.5]

\draw[thick] (-1,-0.8) -- (-0.8,-1) -- (0.8,-1) -- (1,-0.8) -- (1,0.8) -- (0.8,1) -- (-0.8,1) -- (-1,0.8) -- (-1,-0.8);
\draw[color=red] (-1,0) -- (0,0);
\draw[color=red] (0,-1) -- (0,0);
\draw[color=blue] (-0.8,-1)  to[out=65, in =-135] (-0.3,-0.3);
\draw[color=blue] (-1,-0.8) to[out=25, in =-135] (-0.3,-0.3);
\draw[color=red] (-0.9,-0.9) -- (0,0);

 \fill (-0.4,0.6) node[left] {$\Omega$};
\fill (0,0) circle[radius=1.5pt] node[right] {$(0,0)$};

	\end{tikzpicture}
	
\end{center}
\caption{The truncated square in Example \ref{ex:modsq} and some possible streamlines.}
 \label{fig:modsq}  
	\end{figure}

  \section{Preliminaries} \label{sec:prel} $\Omega$ is a bounded convex domain in $\R^2$  and $K\Subset \Omega$ is a compact and convex set, which may reduce to a point. We study the equation in the convex ring $G=\Omega\setminus K$. We assume the following \emph{normalization}: 
  $$\boxed{\dist(\partial \Omega,K)=1.}$$ 
  The boundary value problem 
$$
 \begin{cases} \Delta_{\infty}u\,=\,0\qquad\text{in}\qquad G,\\ \phantom{ \Delta_{\infty}}
      u\,=\,0\qquad\text{on}\qquad \partial \Omega,\\\phantom {\Delta_{\infty}}
      u\,=\,1 \qquad\text{on}\qquad \partial K,
    \end{cases}
    $$
has a unique solution $u_\infty \in C(\overline G)$ in general. By \cite{ESa}, $\nabla u_\infty$ is locally H\"older continuous in $G$. We will assume that also $\nabla u_\infty\in  C(\overline\Omega\setminus K)$. This is fulfilled if for instance $\partial \Omega$ has a piecewise $C^2$ regular boundary. See Lemma 2 and Theorem 2 in \cite{HL}, Theorem 7.1 in \cite{MPS} and Theorem 1 in \cite{WY}.

In \cite{LL} it was established that, for a given initial point $\xi_0\in \partial\Omega$, the gradient flow
$$
\begin{cases}
\displaystyle\frac{d \ba (t)}{d t}& =\, +\nabla u_\infty(\ba (t)),\quad    0\leq t< T, \\
\ba(0)&=\,\xi_0,
\end{cases}
$$
has a unique solution $\ba=\ba(t)$, which terminates at some point $\ba(T)$ on $\partial K$. (Some caution is required if $|\nabla u_\infty(\xi_0)| = 0$.) We say that $\ba$ is a \emph{streamline}. Although unique, two streamlines may meet, join, and continue along a common arc.

We shall employ the $p$-harmonic approximation
$$
   \begin{cases} \Delta_{p}u_p\,=\,0\qquad\text{in}\qquad G,\\ \phantom{ \Delta_{p}}
      u_p\,=\,0\qquad\text{on}\qquad \partial \Omega,\\\phantom {\Delta_{p}}
      u_p\,=\,1 \qquad\text{on}\qquad \partial K,
    \end{cases}
  $$
  for $\mathbf{p>2}$. It is known that $u_p\in C(\overline G)$ and it takes the correct values (in the classical sense) at each boundary point. We shall need the following results from \cite{L} (see also \cite{Ja}):
  \begin{enumerate}
  \item The level curves $\{u_p=c\}$ are convex, if $0\leq c\leq 1$,
  \item $u_p\nearrow u_\infty$ uniformly in $\overline G$,
  \item $|\nabla u_p|\neq 0$ in $G$,
  \item $u_p$ is real analytic in $G$,
  \item $\Delta u_p\leq 0$.
  \end{enumerate}
  The streamlines of $u_p$ do not meet in $G$. This is due to the regularity of $u_p$   and the Picard-Lindel\"of theorem. Properties 1), 3), and 5) are preserved at the limit $p=\infty$. Especially, $\nabla u_{\infty} \neq 0$ in $G$.

 We keep the \emph{normalization}  $\dist (\partial\Omega,  K) = 1$. Then $|\nabla u_\infty|\leq 1$, but we also need a uniform bound for $|\nabla u_p|$.
   The bound 
\begin{equation}
\label{eq:grad1}
|\nabla u_p|\leq 1 \quad \text{on }\partial \Omega.
\end{equation}
   follows by comparison with the distance function 
  $$
  \delta(x)=\dist(x,\partial \Omega).
  $$
  In a convex domain, $\delta$ is a supersolution of the $p$-Laplace equation. Since 
  $$
  0\leq u_p(x)\leq \delta(x)\quad \text{on } \partial G,
  $$
the same inequality also holds in $G$. In general, $|\nabla u_p|$ is unbounded (but $|\nabla u_\infty|\leq 1$), so we have to consider a subdomain, say $\{u_p < c\}$.
  
  \begin{lemma}\label{lem:gradbound} The uniform bound 
  \begin{equation}
  \label{eq:gradboundlem}
|\nabla u_p(x)|\leq \Bigl(\frac{1}{1-c}\Bigr)^\frac{1}{p-2}
  \end{equation}
  holds when $u_p(x)\leq c$, $0<c<1$.
  \end{lemma}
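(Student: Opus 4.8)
\emph{Proof plan.} The bound $|\nabla u_p|\le 1$ on $\partial\Omega$ is \eqref{eq:grad1}, and where $u_p=1$ there is nothing to show, so I would fix an interior point $x_0\in G$, set $c_0:=u_p(x_0)\in(0,1)$, and aim for the sharper estimate $|\nabla u_p(x_0)|\le(1-c_0)^{-1/(p-2)}$; since $t\mapsto(1-t)^{-1/(p-2)}$ is increasing and $c_0\le c$, this yields \eqref{eq:gradboundlem}. The plan is a two-step comparison. First I would use the explicit radial $p$-harmonic functions as lower barriers to show that the level curve of $u_p$ through $x_0$ keeps a definite distance from $K$. Then, working in the convex ring trapped between that level curve and $\partial K$ and comparing $u_p$ there with a multiple of the distance to the outer boundary — a $p$-supersolution in a convex domain by Section~\ref{sec:prel} — I would bound $|\nabla u_p|$ on the level curve itself, which suffices because $x_0$ lies on it.

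For the first step, recall that for $z_0\in\R^2$ the function $g_{z_0}(x)=1-|x-z_0|^{(p-2)/(p-1)}$ solves $\Delta_p g_{z_0}=0$ in $\R^2\setminus\{z_0\}$; it is the radial profile normalized so that $g_{z_0}=0$ on $\{|x-z_0|=1\}$. Fixing $z_0\in K$ (so $z_0\notin G$, and $g_{z_0}$ is $p$-harmonic in $G$ and continuous on $\overline G$), I would check the boundary inequalities $g_{z_0}\le 1=u_p$ on $\partial K$ and, using the normalization $\dist(\partial\Omega,K)=1$ together with $|x-z_0|\ge\dist(x,K)\ge 1$ for $x\in\partial\Omega$, also $g_{z_0}\le 0=u_p$ on $\partial\Omega$. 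The comparison principle then gives $g_{z_0}\le u_p$ in $G$, and taking the supremum over $z_0\in K$ yields
\begin{equation*}
u_p(x)\ \ge\ 1-\dist(x,K)^{(p-2)/(p-1)}\qquad\text{in }\overline G .
\end{equation*}
Read on $\{u_p=c_0\}$ this says $\dist(x,K)\ge(1-c_0)^{(p-1)/(p-2)}$ there, i.e.
\[
\rho\ :=\ \dist\bigl(\{u_p=c_0\},K\bigr)\ \ge\ (1-c_0)^{(p-1)/(p-2)} .
\]

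For the second step, let $W$ be the bounded open convex set enclosed by $\{u_p=c_0\}$; its boundary is a real-analytic convex Jordan curve because the level curves of $u_p$ are convex, $u_p$ is real-analytic, and $\nabla u_p\neq 0$ in $G$ (properties 1, 3, 4 of Section~\ref{sec:prel}). Then $K\subset W\Subset\Omega$, and $u_p$ is $p$-harmonic in $W\setminus K$ with $u_p=c_0$ on $\partial W$ and $u_p=1$ on $\partial K$. I would put $w=(u_p-c_0)/(1-c_0)$ and compare it with $h(x)=\rho^{-1}\dist(x,\partial W)$, which is a $p$-supersolution in the convex domain $W$ (Section~\ref{sec:prel} and homogeneity of $\Delta_p$), vanishes (like $w$) on $\partial W$, and is $\ge 1=w$ on $\partial K$ since $\dist(x,\partial W)\ge\rho$ there. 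The comparison principle gives $w\le h$ in $W\setminus K$; as both sides vanish on the smooth curve $\partial W$ and $\nabla w(x_0)$ is parallel to the inward normal at $x_0$, a Hopf-type boundary inequality yields $|\nabla w(x_0)|\le\rho^{-1}$. Hence
\begin{equation*}
|\nabla u_p(x_0)|=(1-c_0)\,|\nabla w(x_0)|\ \le\ \frac{1-c_0}{\rho}\ \le\ (1-c_0)^{\,1-\frac{p-1}{p-2}}=(1-c_0)^{-\frac{1}{p-2}},
\end{equation*}
and since $c_0\le c$ this is at most $\bigl(\tfrac{1}{1-c}\bigr)^{1/(p-2)}$, as required.

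The argument is short, and I expect the only delicate points to be (i) confirming that $\{u_p=c_0\}$ is a smooth convex Jordan curve enclosing $K$, so that $W$ is a genuine convex domain and the Hopf-type inequality for $w$ on $\partial W$ is legitimate — exactly properties 1, 3, 4 of Section~\ref{sec:prel} — and (ii) keeping track of the exponent $(p-2)/(p-1)$, which is dictated by the radial $p$-harmonic profile and produces precisely the power $1/(p-2)$ in the statement. The main (mild) obstacle is the rigorous justification of the normal-derivative comparison at $\partial W$; everything else is a routine application of the comparison principle. One further remark: the chain of inequalities is sharp, since for $\Omega$ a disc and $K$ its centre one has $\rho=(1-c_0)^{(p-1)/(p-2)}$ with equality throughout.
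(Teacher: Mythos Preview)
Your proof is correct and follows essentially the same two-step comparison as the paper: a radial $p$-harmonic barrier from below to estimate $\dist(\{u_p=c_0\},K)$, then the distance-function supersolution from above to bound the normal derivative on the level curve. The only differences are cosmetic---you work at the level $c_0=u_p(x_0)$ (which lets you bypass the paper's appeal to the maximum principle for $|\nabla u_p|$) and center the radial barrier at an arbitrary point of $K$ using $\dist(\partial\Omega,K)=1$ directly, whereas the paper fixes the level $c$, invokes the maximum principle for $|\nabla u_p|$, and centers its barrier at a distinguished point of $\partial K$ via the largest inscribed ball; the arithmetic and the final exponent are identical.
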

  
  \begin{proof}
  Let $\Upsilon_p(c)$ denote the level curve $\{u_p=c\}$ and 
  $$
  \delta_p(x) = \dist (x,\Upsilon_p(c)).
  $$ 
  Since $|\nabla u_p|$ obeys the maximum principle and $|\nabla u_p|\leq 1$ on $\partial \Omega$ by \eqref{eq:grad1}, it is enough to control $|\nabla u_p|$ on $\Upsilon_p(c)$. We see that 
  \begin{equation}
  \label{eq:grad2}
  c\leq u_p(x)\leq c+(1-c)\frac{\delta_p(x)}{\dist(\Upsilon_p(c), \partial K)}
  \end{equation}	
   on $\Upsilon_p(c)$ and on $\partial K$, i.e., on the boundary of $\{1 >u_p>c\}$. Again, the majorant is a supersolution to the $p$-Laplace equation, and hence \eqref{eq:grad2} holds in $\{1 >u_p>c\}$ by the comparison principle. It follows that
  \begin{equation}
  \label{eq:gradbound}
  |\nabla u_p(x)|\leq \frac{1-c}{\dist(\Upsilon_p(c),\partial K)}, 
  \end{equation}
  on\footnote{Since $u_p\nearrow u_\infty$, $\dist(\Upsilon_p(c),\partial K)$ increases with $p$. Thus we get an upper bound independent of $p$. This is sufficient for our purpose.} $\Upsilon_p(c)$.
  
  To get the explicit upper bound in \eqref{eq:gradboundlem}, we assume that $x_0\in \partial K$ is a point at which the distance $\dist(\Upsilon_p(c),\partial K)$ is attained. Let $R$ be the radius of the  largest ball $B_R(x_0)\subset \Omega$. Then 
  $$
  u_p(x)\geq 1-\left(\frac{|x-x_0|}{R}\right)^\frac{p-2}{p-1}\quad \text{in }B_R(x_0)\setminus K
  $$
  by comparison. Here the minorant is $p$-harmonic in $B_R(x_0)\setminus \{x_0\}$. Now 
  $$
    1-\left(\frac{|x-x_0|}{R}\right)^\frac{p-2}{p-1} = c \iff |x-x_0|=R(1-c)^{1+\frac{1}{p-2}} = r_c
  $$
  and clearly $\dist(\Upsilon_p(c),\partial K)\geq r_c$. We have by \eqref{eq:gradbound}
  $$
  |\nabla u_p(x)|\leq \frac{1}{R(1-c)^\frac{1}{p-2}}.
  $$
  To conclude, use $R\geq \dist(\partial\Omega, \partial K)=1$. 
  \end{proof}
  
\section{Equicontinuity of $|\nabla u_p|$}\label{sec:eqcont}

We shall prove that   
$$
\lim_{p\to \infty}{|\nabla u_p|} = |\nabla u_\infty|
$$
locally \emph{uniformly} in $G$. From \cite{KZZ} we can extract the following important properties: If $D\Subset G$, then 
\[\tag{\textbf{I}}
\iint_D |\nabla u_p-\nabla u_\infty|^2 \, dx_1 dx_2\to 0,\quad \text{as }p\to \infty,\]
\begin{equation}\tag{\textbf{J}}
\iint_D |\nabla (|\nabla u_p|^2)|^2 \, dx_1 dx_2\leq M_D<\infty,\end{equation}
for all (large) $p$.

The constant $M_D$ depends on $\|\nabla u_p\|_{L^\infty(E)}$, where $D\Subset E\Subset G$, and $\dist (D, \partial G)$, but not on $p$.

In \cite{KZZ} the estimates were derived  for solutions $u^\e$ of the auxiliary equation
$$
\Delta_\infty u^\e + \e \Delta u^\e = 0
$$
while we use $\Delta_pu_p = 0$ written as
$$
\Delta_\infty u_p +\frac{1}{p-2}\,|\nabla u_p|^2\Delta u_p = 0.
$$
The advantage of our approach is that the inequality $\Delta u_p\leq 0$ is available in convex domains for $p\geq 2$.

The conversion from $u^\e$ to $u_p$ requires only obvious changes. Formally, the factor $\e$ in front of an integral in \cite{KZZ} should be moved in under the integral sign and then replaced by $|\nabla u_p|^2/(p-2)$, upon which every $u^\e$ be replaced by $u_p$. This procedure is explained in our Appendix.

In order to prove that the family $\{|\nabla u_p|\}$ is locally equicontinuous, we shall use a device due to Lebesgue in \cite{Leb}. A function $f\in C(\overline B_R)\cap W^{1,2}(B_R)$ is monotone (in the sense of Lebesgue) if 
$$
\osc_{\partial B_r}f = \osc_{\overline B_r} f, \quad 0<r<R,
$$
where $B_r$ are concentric discs.  For such a function
\begin{equation}\label{eq:lebosc}
\Bigl(\osc_{B_r} f\Bigr)^2\ln \frac{R}{r}\leq \pi \iint_{B_R} |\nabla f|^2 \, dx_1 dx_2. 
\end{equation}
The proof is merely an integration in polar coordinates, cf. \cite{Leb}. We shall apply this oscillation lemma on the function $f = |\nabla u_p|^2$. It was shown by Bojarski and Iwaniec in \cite{BI} that the mapping 
 $$
  \frac{\partial u_p}{\partial x_1}-  i\,\frac{\partial u_p}{\partial x_2}, \quad i^2=-1,
  $$
  is quasiregular. That property implies that its norm  $|\nabla u_p|$ satisfies the maximum principle, and, where $|\nabla u_p|\neq 0$, also the minimum principle. Thus $|\nabla u_p|$ is monotone. So is $|\nabla u_p|^2$. From \eqref{eq:lebosc} we obtain 
  $$
  \Bigl(\osc_{B_r}\{|\nabla u_p|^2\}\Bigr)^2\ln \frac{R}{r}\leq \pi \iint_{B_R} |\nabla (|\nabla u_p|^2)|^2 \, dx_1 dx_2. 
  $$
  The uniform bound in (2) and a standard covering argument for compact sets yields the following result.
  
  \begin{thm} \label{thm:eqcont}(Equicontinuity) Let $D\Subset G$. Given $\e>0$, there is $\delta = \delta (\e,D)$ such that the inequality
  $$
\Big||\nabla u_p(x)|-|\nabla u_p(y)|\Big|<\e \quad \text{when $|x-y|<\delta$}, \quad x,y\in D,
  $$
  holds simultaneously for all $p > 2$.  
  \end{thm}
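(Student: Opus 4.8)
The plan is to combine Lebesgue's oscillation inequality \eqref{eq:lebosc} with the uniform bound \textbf{(J)} through a standard chaining/covering argument. First I would reduce to a statement about the function $f_p = |\nabla u_p|^2$: since $t\mapsto t^2$ is, on the uniformly bounded range of $|\nabla u_p|$ provided by Lemma \ref{lem:gradbound} (note that $D\Subset G$ forces $D\subset\{u_p\le c\}$ for some $c<1$), bi-Lipschitz away from $0$ and at worst Hölder-$\tfrac12$ near $0$, a modulus of continuity for $f_p$ yields one for $|\nabla u_p|$; in fact the cleanest route is to control $\big||\nabla u_p(x)|-|\nabla u_p(y)|\big|^2 \le \osc_{B_r}\{|\nabla u_p|^2\}$ directly, using that $|a|-|b|$ and $|a|^2-|b|^2$ have the same sign and $(|a|-|b|)^2 \le \big||a|^2-|b|^2\big|$.

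Next I would fix $D\Subset G$ and choose intermediate sets $D\Subset E'\Subset E\Subset G$, set $R_0 = \tfrac14\dist(D,\partial E)$, and for any two points $x,y\in D$ with $|x-y| = r < R_0$ apply the monotonicity inequality on the disc $B_{R_0}(x)\subset E$. Since $|\nabla u_p|$, and hence $|\nabla u_p|^2$, is monotone in the sense of Lebesgue (this was established just above via the Bojarski--Iwaniec quasiregularity and the max/min principle), \eqref{eq:lebosc} gives
\[
\Big(\osc_{B_r(x)}\{|\nabla u_p|^2\}\Big)^2 \ln\frac{R_0}{r} \le \pi \iint_{B_{R_0}(x)} |\nabla(|\nabla u_p|^2)|^2\,dx_1dx_2 \le \pi M_E,
\]
where $M_E$ is the constant from \textbf{(J)} applied with $E'\Subset E$; crucially $M_E$ depends only on $\|\nabla u_p\|_{L^\infty(E)}$ (uniformly bounded in $p$ by Lemma \ref{lem:gradbound}) and $\dist(E',\partial E)$, hence is independent of $p$. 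Therefore
\[
\big||\nabla u_p(x)|-|\nabla u_p(y)|\big|^2 \le \osc_{B_r(x)}\{|\nabla u_p|^2\} \le \left(\frac{\pi M_E}{\ln(R_0/r)}\right)^{1/2} \xrightarrow[r\to0]{} 0,
\]
uniformly in $p$, which is exactly the claimed equicontinuity: given $\e>0$ choose $\delta<R_0$ so small that $(\pi M_E)^{1/2}/\ln(R_0/\delta)^{1/2} < \e^2$... wait, more precisely so that $\big(\pi M_E/\ln(R_0/\delta)\big)^{1/2} < \e^2$, i.e. $\delta = R_0\exp(-\pi M_E/\e^4)$.

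The covering argument is needed to pass from the local estimate on discs $B_{R_0}(x)$ to all of $D$: cover the compact set $D$ by finitely many discs on which the above applies with a common radius $R_0$ and common constant (taking $E$ a fixed neighborhood of $D$), and take $\delta$ the minimum of the resulting thresholds together with a Lebesgue-number-type bound ensuring $|x-y|<\delta$ implies $x,y$ lie in a common such disc. I expect the main obstacle, or rather the main point demanding care, to be bookkeeping the dependence of $M_D$ (from \textbf{(J)}) on $p$: one must verify it genuinely depends only on the $L^\infty$-norm of $\nabla u_p$ over a slightly larger set and on distances, and then invoke Lemma \ref{lem:gradbound} to bound that norm uniformly in $p$ on $E\Subset G$; everything else is the routine polar-coordinate estimate behind \eqref{eq:lebosc} and a compactness covering, both of which may be cited rather than reproduced.
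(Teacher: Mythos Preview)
Your proposal is correct and follows essentially the same route as the paper: monotonicity of $|\nabla u_p|^2$ via the Bojarski--Iwaniec quasiregularity, Lebesgue's oscillation inequality \eqref{eq:lebosc}, the uniform $W^{1,2}$ bound \textbf{(J)}, and a covering argument. The paper merely sketches this in the paragraph preceding the theorem; you have supplied the details (the reduction $(|a|-|b|)^2\le\big||a|^2-|b|^2\big|$, the choice of intermediate sets, and the explicit $\delta$) that the paper leaves implicit.
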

  Since $\nabla u _p\to \nabla u_\infty$ in $L^2_\text{loc}(G)$ we can use Ascoli's theorem to conclude that 
  $$
  \lim_{p\to \infty}|\nabla u_p| = |\nabla u_\infty|
  $$
  \emph{locally uniformly}. (More accurately, we have to extract a subsequence in Ascoli's theorem, but since the limit $|\nabla u_\infty|$ is unique, this precaution is not called for here.)
  
  \textbf{Caution:} The more demanding convergence $\nabla u_p\to \nabla u_\infty$ holds a.e., but perhaps \emph{not} locally uniformly.
  
  Let us finally mention that the \emph{uniform} convergence is not global. For example, in the ring $0<|x|<1$ we have 
  $$
  u_p(x)=1-|x|^\frac{p-2}{p-1}, \quad u_\infty = 1-|x|.
  $$
  Now $|\nabla u_p|$ is not even bounded near $x=0$. Thus the convergence cannot be uniform in the whole ring.
  
  \section{Convergence of the Streamlines}
  In this section, we study the convergence of the streamlines and prove Theorem \ref{thm:speed}.
 It is plain that the level curves $\{u_p=c\}$ converge to the level curves $\{u_\infty = c\}$.  However, the convergence of the streamlines requires a more sophisticated proof. (The problem is the identification of the limit as an $\infty$-streamline.)
  
  Suppose that we have the streamlines $\ba_p$ and $\ba_\infty$ having the same initial point $\ba_p(0)=\ba_\infty(0)=x_0$. Now
$$
  \frac{d\ba_p(t)}{dt} = \nabla u_p (\ba_p(t)), \quad  \frac{d\ba_\infty(t)}{dt} = \nabla u_\infty (\ba_\infty(t))
  $$
  when $0 < t < T_p$, where $u_p(\ba_p(T_p))= 1$. Thus
 $$
  \ba_p(t_2)-\ba_p(t_1) = \int_{t_1}^{t_2} \nabla u_p(\ba_p(t)) dt.
$$
Using the bound
$$
|\nabla u_p|\leq \Bigl(\frac{1}{1-c}\Bigr)^{\frac{1}{p-2}}	, \quad \text{when }u_p\leq c, 
$$
in Lemma \ref{lem:gradbound} we see that 
\begin{equation}
\label{eq:alpha3}
| \ba_p(t_2)-\ba_p(t_1)|\leq \Bigl(\frac{1}{1-c}\Bigr)^\frac{1}{p-2}|t_2-t_1|
\end{equation}
as long as the curves are below the level $u_p=c$, i.e., $u_p(\ba(t_2))\leq c$. In particular, the bound is valid in the domain $\{u_\infty<c\}$, where $c<1$. Thus, the family of curves is locally equicontinuous. By Ascoli's theorem we can extract a sequence $p_j\to \infty$ such that 
$$
\ba_{p_j}(t)\to \ba(t)
$$
uniformly in every domain $\{u_\infty<c\}$. Here $\ba(t)$ is some curve with initial point $\ba(0)=x_0$.

The endpoint of $\ba$ is on $\partial K$. Indeed, let $t_p=t_p(c)$ denote the parameter value at which $u_p(\ba_p(t_p))=c$. Take any convergent sequence, say $t_p\to t^*$. Then 
$$
c=\lim_{p\to\infty} u_p(\ba_p(t_p)) = u_\infty(\ba(t^*)).
$$
Thus $t^* = t_\infty(c)$. Then $t_p(c)\to t_\infty(c)$ for all $c$.

 By \eqref{eq:alpha3}
$$
| \ba(t_2)-\ba(t_1)|\leq |t_2-t_1|.
$$
Rademacher's theorem for Lipschitz continuous functions implies that $\ba(t)$ is differentiable at a.e. $t$. 

We claim that $\ba = \ba_\infty$. Since they start at the same point, the uniqueness of $\infty$-streamlines shows that it is enough to verify
$$
\frac{d\ba(t)}{dt} = \nabla u_\infty (\ba(t)).
$$
To this end, we shall employ the convex functions $F_p(t)=u_p(\ba_p(t))$. Indeed, 
$$
\frac{dF_p(t)}{dt} = \Big\langle \nabla u_p(\ba_p(t)), \frac{d\ba_p(t)}{dt}\Big\rangle = |\nabla u_p(\ba_p(t))|^2
$$
and 
$$
\frac{d^2F_p(t)}{dt^2} = 2\,\Delta_\infty u_p(\ba_p(t)) = -\frac{2}{p-2}\,\Delta u_p(\ba_p(t))\,|\nabla u_p(\ba_p(t))|^2.
$$
By Lewis's theorem, $\Delta u_p\leq 0$ in convex ring domains, if $p\geq 2$. Thus, 
$$
\frac{d^2F_p(t)}{dt^2}\geq 0
$$
and so \emph{the function $F_p(t)$ is convex}. The convergence
$$
F_{p}(t) = u_{p}(\ba_{p}(t))\to u_\infty(\ba(t)) = F(t)
$$
is at least locally uniform, when $p$ takes the values $p_1, p_2, p_3,\ldots$ extracted above. Also the limit $F(t)$ is convex, of course.

We have the locally uniform convergence
$$
 |\nabla u_{p}(\ba_p(t))|^2\to  |\nabla u_\infty(\ba(t))|^2,
$$
which follows from Theorem \ref{thm:eqcont} by writing 
$$
|\nabla u_p(\ba_p(t))|-|\nabla u_\infty(\ba(t))| = |\nabla u_p(\ba_p(t))|-|\nabla u_p(\ba(t))|+|\nabla u_p(\ba(t))|-|\nabla u_\infty(\ba(t))|.
$$
Thus, 
$$
\frac{dF_p(t)}{dt}=|\nabla u_{p}(\ba_p(t))|^2\,\to\, |\nabla u_\infty(\ba(t))|^2.
$$
It follows that\footnote{$\int |\nabla u_\infty(\ba(t))|^2\phi(t) dt\leftarrow  \int F_p'(t)\phi(t) dt = -\int  F_p(t)\phi'(t) dt \to -\int F(t) \phi'(t) dt$}  $F'(t) = |\nabla u_\infty(\ba(t))|^2$ for a.e. $t$. We also have by the chain rule
$$
\frac{dF(t)}{dt} = \Big\langle\nabla u_\infty(\ba(t)), \frac{d\ba}{dt}\Big\rangle
$$
a.e., since $\frac{d\ba}{dt}$ exists for a.e. $t$.

We have arrived at the identity
$$
|\nabla u_\infty(\ba(t))|^2 = \Big\langle\nabla u_\infty(\ba(t)), \frac{d\ba}{dt}\Big\rangle
$$
valid for a.e. $t$. From 
$$
\ba_p(t_2)-\ba_p(t_1) \leq \int_{t_1}^{t_2} |\nabla u_p(\ba_p(t)) |dt, 
$$
we get
$$
\ba(t_2)-\ba(t_1) \leq \int_{t_1}^{t_2} |\nabla u_\infty(\ba(t)) |dt, 
$$
  and, hence for a.e. $t$
  $$
  \Big\vert\frac{d\ba(t)}{dt}\Big\vert \leq |\nabla u_\infty(\ba(t)) |.
  $$
  We conclude that in the Cauchy-Schwarz inequality
  $$
  |\nabla u_\infty(\ba(t)) |^2 = \Big\langle\nabla u_\infty(\ba(t)), \frac{d\ba}{dt}\Big\rangle \leq |\nabla u_\infty(\ba(t))|\Big\vert \frac{d\ba}{dt}\Big\vert\leq |\nabla u_\infty(\ba(t)) |^2
  $$
  we have equality. It follows that 
  $$
 \frac{d\ba}{dt}= \nabla u_\infty(\ba(t))
  $$
  for a.e. $t$. In fact, it holds everywhere because now the identity
  $$
  \ba(t_2)-\ba(t_1) = \int_{t_1}^{t_2} \nabla u_\infty(\ba(t)) dt
  $$
  can be differentiated. This concludes our proof of the fact $\ba = \ba_\infty$.

  We see that the tangent $\frac{d\ba}{dt}$ is continuous. The proof reveals that the convex functions $F_p\to F$ uniformly and hence $F$ is convex as well. Therefore, its derivative 
  $$
  F'(t)= |\nabla u_\infty(\ba(t))|^2
  $$ is non-decreasing. In other words, $|\nabla u_\infty|^2$ is non-decreasing along the limit streamline.
  
 This proves Theorem \ref{thm:speed}.
  
  \section{Quadrilaterals and Triangles}
  Curved quadrilaterals and triangles, bounded by arcs of streamlines and level curves, are useful building blocks. It is tentatively understood that at least the interior of the figures are comprised in $G$; the level arcs  can be   on $\partial \Omega$ and, occasionally, on   $\partial K$.

  Recall that the $\infty$-streamline 
  $$
  \ba(t), \quad 0\leq t\leq T, 
  $$
  with initial point $\ba(0)=a\in \partial\Omega$ is unique and terminates at $\ba(T)$ on $\partial K$. On its way, it may (and usually does) meet other streamlines and has common parts with them. By Theorem \ref{thm:speed}, the speed 
  $$
  \left|\frac{d\ba(t)}{d t}\right| = |\nabla u_\infty (\ba(t))|
  $$
  is non-decreasing. Thus we have the bound\footnote{$$|\nabla u_{\infty}(\ba(T))| = \lim_{t\to T-}  |\nabla u_{\infty}(\ba(t))|$$} 
  $$
  |\nabla u_\infty (\ba(t_1))|\leq |\nabla u_\infty (\ba(t_2))|, \quad 0\leq t_1\leq t_2\leq T.
  $$
  Sometimes the result below (cf. Lemma 12 in \cite{LL}), valid for curved quadrilaterals and triangles, provides us with the reverse inequality, so that we may even conclude that the speed is constant along suitable arcs of streamlines.
\begin{figure}[h!]
  \begin{center}
 
	\begin{tikzpicture}[domain=-5:5,scale=1.2]
	       
\fill (3,0) circle[radius=2pt] node[below] {$b$};
\fill (-3,0) circle[radius=2pt] node[below] {$a$};
\fill (2,4) circle[radius=2pt] node[above] {$b'$};
\fill (-2,4) circle[radius=2pt] node[above] {$a'$};
\fill (0.5,3.25) circle[radius=2pt] node[above] {$\eta$};
\fill (0,-0.75) circle[radius=2pt] node[below] {$\xi$};
\draw (0,-0.75) .. controls (0.3,1) .. (0.5,3.25); \fill (0.3,1) node[left] {$\bm$};
\draw (-3,0) .. controls (0,-1) .. (3,0);\fill (-1.2,-0.8) node[below] {$\bs$};
\draw (-3,0) .. controls (-2,2) .. (-2,4);\fill (-2.2,2) node[left] {$\ba$};
\draw (3,0) .. controls (3,2) .. (2,4);\fill (3,2) node[right] {$\bbeta$};
\draw (2,4) .. controls (0.5,3) .. (-2,4); \fill (-0.5,3.6) node[above] {$\bo$};




	\end{tikzpicture}
	
\end{center}
\caption{The quadrilateral $abb'a'$.}
  \label{fig:quadbasic}  
	\end{figure}
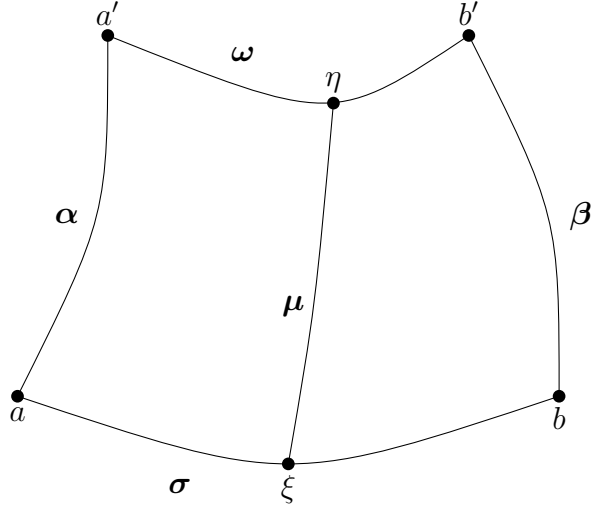
  \begin{lemma} \label{lem:ll} Suppose that the streamlines $\ba$ and $\bbeta$ together with the level curves $\bs$ (lower level) and $\bo$ (upper level) form a quadrilateral with vertices $a,b,b'$ and $a'$. If $\ba$ and $\bbeta$ do not meet before reaching $\omega$, then
  $$
  \max_{\overline{a'b'}} |\nabla u_\infty(\bo)|\leq  \max_{\overline{ab}} |\nabla u_\infty(\bs)|, 
  $$
  i.e., the maximal speed on the upper level is the smaller one.
  \end{lemma}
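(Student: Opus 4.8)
The plan is to extract both sides of the inequality from a single conserved quantity of the $p$-harmonic approximation, whose exponent, in the limit $p\to\infty$, converts an $L^{p-1}$--average of $|\nabla u_p|$ along a level arc into the \emph{maximum} of $|\nabla u_\infty|$ along it.

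Fix the levels $0\le c_1<c_2\le 1$ of $\bs$ and $\bo$. For $p>2$ let $\ba_p,\bbeta_p$ be the streamlines of $u_p$ issuing from the same initial points on $\partial\Omega$ as $\ba,\bbeta$; by the convergence results of the preceding section, $\ba_p\to\ba$ and $\bbeta_p\to\bbeta$ uniformly on every $\{u_\infty<c\}$, $c<1$. Let $Q_p$ be the curvilinear quadrilateral they bound between levels $c_1$ and $c_2$, and $\ell^p_c$ the arc of $\{u_p=c\}$ contained in $Q_p$. Since $u_p$ is analytic with $\nabla u_p\ne 0$, each $u_p$-streamline crosses $\{u_p=c\}$ once and no two of them meet (Picard--Lindel\"of), so $\ell^p_c$ is a well-defined arc for every $c\in(0,1)$ and $Q_p$ converges to the quadrilateral $abb'a'$, with $\ell^p_{c_1}\to\bs$ and $\ell^p_{c_2}\to\bo$. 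Now put
\[
\Phi_p(c)=\int_{\ell^p_c}|\nabla u_p|^{\,p-1}\,ds .
\]
Because $\nabla u_p/|\nabla u_p|$ is the unit normal of $\{u_p=c\}$, this is the flux of the divergence-free field $|\nabla u_p|^{p-2}\nabla u_p$ across $\ell^p_c$; the lateral sides of $Q_p$ are streamlines, along which $\nabla u_p$ is tangent and carries no flux, so the divergence theorem on the region of $Q_p$ between two levels gives $\Phi_p(c)\equiv\mathrm{const}$.

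I would then identify $\lim_{p\to\infty}\Phi_p(c)^{1/(p-1)}$ for $c\in\{c_1,c_2\}$. Assume first $\ell^\infty_c\Subset G$. On a neighbourhood of $\ell^\infty_c$ one has $|\nabla u_p|\to|\nabla u_\infty|$ \emph{uniformly} (Theorem \ref{thm:eqcont}), $\ell^p_c\to\ell^\infty_c$, and $\mathrm{length}(\ell^p_c)\to\mathrm{length}(\ell^\infty_c)=:L_c\in(0,\infty)$ (the level curves are convex), while $|\nabla u_p|\le(1-c_2)^{-1/(p-2)}$ on $Q_p$ by Lemma \ref{lem:gradbound}. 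As $\Phi_p(c)^{1/(p-1)}$ is the $L^{p-1}(\ell^p_c,ds)$--norm of $|\nabla u_p|$, the bound $\Phi_p(c)^{1/(p-1)}\le\big(\max_{\ell^p_c}|\nabla u_p|\big)\,\mathrm{length}(\ell^p_c)^{1/(p-1)}$ together with a matching lower bound, obtained by integrating only over a short sub-arc on which $|\nabla u_p|$ is within $\e$ of its maximum, gives
\[
\lim_{p\to\infty}\Phi_p(c)^{\frac1{p-1}}=\max_{\ell^\infty_c}|\nabla u_\infty| .
\]
Evaluating this at $c=c_1$ and $c=c_2$ and using $\Phi_p(c_1)=\Phi_p(c_2)$ for every $p$ yields $\max_{\bo}|\nabla u_\infty|=\max_{\bs}|\nabla u_\infty|$: actually an equality, hence in particular the asserted inequality. (Combined with Theorem \ref{thm:speed}, this is also precisely what forces $|\nabla u_\infty|$ to be constant along the interior streamlines of the quadrilateral.)

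The hypothesis that $\ba$ and $\bbeta$ do not meet before $\bo$ enters exactly here: it guarantees $a'\ne b'$, i.e.\ $L_{c_2}=\mathrm{length}(\bo)>0$, which is indispensable for the $L^{p-1}$--norm argument (if $\bo$ collapses to a point then $\Phi_p(c_2)^{1/(p-1)}\to 0$ for the wrong reason). The main obstacle is the technical justification of the displayed limit over the \emph{moving} domains $\ell^p_c$ --- controlling simultaneously $\ell^p_c\to\ell^\infty_c$, the convergence of lengths, and the uniform convergence of $|\nabla u_p|$ near the arc; for $\ell^\infty_c\Subset G$ this is delivered by Theorem \ref{thm:eqcont} and the preceding section. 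The two degenerate configurations allowed by the statement need extra care, and this is where one retains only the inequality: if $\bs\subset\partial\Omega$ (so $c_1=0$) one has merely $|\nabla u_p|\le 1$ and pointwise convergence on $\partial\Omega$, which still gives $\limsup_p\Phi_p(0)^{1/(p-1)}\le\max_{\bs}|\nabla u_\infty|$; and if $\bo\subset\partial K$ (so $c_2=1$, where $|\nabla u_p|$ is unbounded) one works at a level $c_2'<1$, lets $c_2'\nearrow 1$, and uses the continuity of $\nabla u_\infty$ up to $\partial K$. In either case the full limit at the interior level, combined with the one-sided estimate at the boundary level, still gives Lemma \ref{lem:ll}.
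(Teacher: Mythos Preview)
The paper does not give its own proof of this lemma; it is quoted from \cite{LL} (Lemma~12 there) without argument. Your approach is therefore an independent proof, and it is essentially correct.

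The core idea---that $\Phi_p(c)=\int_{\ell^p_c}|\nabla u_p|^{p-1}\,ds$ is the flux of the divergence-free field $|\nabla u_p|^{p-2}\nabla u_p$ through $\ell^p_c$, hence constant in $c$ because the lateral sides are streamlines and carry no flux---is clean and correct. The passage $\Phi_p(c)^{1/(p-1)}\to\max_{\ell^\infty_c}|\nabla u_\infty|$ is the standard $L^{p-1}\to L^\infty$ mechanism, and the ingredients you need (locally uniform convergence of $|\nabla u_p|$ from Theorem~\ref{thm:eqcont}, convergence of $p$-streamlines from the preceding section, convexity and hence length-stability of level arcs) are all supplied by the paper. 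In fact, for $0<c_1<c_2<1$ your argument yields the \emph{equality} $\max_{\bo}|\nabla u_\infty|=\max_{\bs}|\nabla u_\infty|$ directly, which is the second assertion of Lemma~\ref{lem:quad}; the paper instead derives that equality by combining the present inequality with Theorem~\ref{thm:speed}.

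Your handling of the boundary case $\bs\subset\partial\Omega$ is slightly loose: the claim ``pointwise convergence on $\partial\Omega$'' is not established in the paper. A cleaner route is to note that $u_p\le u_\infty$ in $G$ with equality on $\partial\Omega$, so the inward normal derivatives satisfy $|\nabla u_p|\le|\nabla u_\infty|$ on $\partial\Omega$, giving $\Phi_p(0)\le(\max_{\bs}|\nabla u_\infty|)^{p-1}\,\mathrm{length}(\bs)$ directly. Alternatively, as you indicate for the upper boundary, one may apply the interior equality at a level $c_1'>0$ and let $c_1'\searrow 0$, using the assumed continuity of $\nabla u_\infty$ on $\overline\Omega\setminus K$. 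Either fix closes the argument.
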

  
  Suppose now that $\xi\in \overline{ab}$ is a point on the lower level curve $\bs$ at which 
  $$
  |\nabla u_\infty(\xi)|= \max_{\overline{ab}} |\nabla u_\infty(\bs)| = M.
  $$
  Let $\bm$ be the streamline that passes through $\xi$. It intersects $\bo$ at some point $\eta\in \overline{a'b'}$ (it may have joined $\ba$ or $\bbeta$ before reaching $\eta$). See Figure \ref{fig:quadbasic}. The following result holds:
  
\begin{lemma}\label{lem:quad}  
We have
$$
|\nabla u_\infty(\bm)| = M \quad \text{on } \overline{\xi\eta}.
$$
Moreover, 
$$
\max_{\overline{a'b'}} |\nabla u_\infty(\bo)|= \max_{\overline{ab}} |\nabla u_\infty(\bs)|.
$$
  \end{lemma}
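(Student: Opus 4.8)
The plan is to cut the quadrilateral $abb'a'$ along the streamline $\bm$ into two pieces, apply Lemma \ref{lem:ll} to each piece to bound $|\nabla u_\infty(\eta)|$ from above by $M$, and then invoke the monotonicity of the speed along $\bm$ (Theorem \ref{thm:speed}) to obtain the matching lower bound. The identity $|\nabla u_\infty(\bm)| = M$ on all of $\overline{\xi\eta}$ then follows because a non-decreasing function which agrees at its two endpoints is constant.

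First I would treat the generic case in which $\bm$ meets neither $\ba$ nor $\bbeta$ before reaching $\bo$. Then $\bm$ splits $abb'a'$ into a quadrilateral $a\xi\eta a'$, bounded by $\ba$, the sub-arc $\overline{a\xi}$ of $\bs$, the arc $\overline{\xi\eta}$ of $\bm$, and the sub-arc $\overline{a'\eta}$ of $\bo$, and a quadrilateral $\xi bb'\eta$ bounded analogously. Since $\ba$ and $\bm$ do not meet before $\bo$ in the first piece, Lemma \ref{lem:ll} gives
$$
|\nabla u_\infty(\eta)|\ \le\ \max_{\overline{a'\eta}}|\nabla u_\infty(\bo)|\ \le\ \max_{\overline{a\xi}}|\nabla u_\infty(\bs)|\ \le\ M,
$$
the last inequality because $\xi$ maximises $|\nabla u_\infty|$ over all of $\overline{ab}$. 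On the other hand Theorem \ref{thm:speed}, applied to $\bm$ between $\xi$ and $\eta$, gives $M = |\nabla u_\infty(\xi)|\le|\nabla u_\infty(\eta)|$. Hence $|\nabla u_\infty(\eta)| = M$, and the monotonicity of the speed along $\bm$ forces $|\nabla u_\infty(\bm)| \equiv M$ on $\overline{\xi\eta}$.

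Next I would dispose of the degenerate cases. If $\bm$ joins $\ba$ at a point $\zeta$ before reaching $\bo$, then $\eta = a'$, and the region between $\ba$ and $\bm$ up to $\zeta$ is a curved triangle with vertices $a,\xi,\zeta$; the triangle version of Lemma \ref{lem:ll} (cf. Lemma 12 in \cite{LL}) bounds $|\nabla u_\infty(\zeta)|$ by $\max_{\overline{a\xi}}|\nabla u_\infty(\bs)|\le M$, while monotonicity along $\bm$ gives the reverse, so $|\nabla u_\infty(\bm)|\equiv M$ on $\overline{\xi\zeta}$. Past $\zeta$ the curve $\bm$ coincides with $\ba$ up to $\eta=a'$, and since $|\nabla u_\infty(a')|\le M$ by Lemma \ref{lem:ll} applied to the whole quadrilateral, monotonicity again forces $|\nabla u_\infty(\bm)|\equiv M$ on $\overline{\zeta\eta}$. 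The case where $\bm$ joins $\bbeta$ is symmetric, and the trivial cases $\xi=a$ or $\xi=b$ (where $\bm=\ba$ or $\bm=\bbeta$) follow directly from monotonicity together with the whole-quadrilateral estimate.

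Finally, the ``moreover'' holds in all cases at once: Lemma \ref{lem:ll} for $abb'a'$ gives $\max_{\overline{a'b'}}|\nabla u_\infty(\bo)|\le \max_{\overline{ab}}|\nabla u_\infty(\bs)| = M$, and since $\eta\in\overline{a'b'}$ with $|\nabla u_\infty(\eta)|=M$, the reverse inequality holds too. I expect the main difficulty to be organisational rather than analytic: setting up the two sub-regions precisely, checking in each that the two bounding streamlines do not meet prematurely so that Lemma \ref{lem:ll} (or its triangle analogue) legitimately applies, and keeping track of the shared arcs in the degenerate configurations. No estimate beyond Theorem \ref{thm:speed} and Lemma \ref{lem:ll} should be needed.
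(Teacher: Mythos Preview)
Your proof is correct, but it works harder than necessary. The paper never splits the quadrilateral: since $\eta\in\overline{a'b'}$, a single application of Lemma~\ref{lem:ll} to the original quadrilateral $abb'a'$ already gives
\[
|\nabla u_\infty(\eta)|\ \le\ \max_{\overline{a'b'}}|\nabla u_\infty(\bo)|\ \le\ \max_{\overline{ab}}|\nabla u_\infty(\bs)|\ =\ M,
\]
and then monotonicity along $\bm$ (Theorem~\ref{thm:speed}) pins $|\nabla u_\infty(\bm)|$ to $M$ on all of $\overline{\xi\eta}$. No sub-regions, no degenerate cases. You in fact invoke precisely this whole-quadrilateral estimate in your ``moreover'' paragraph; had you used it from the outset, the splitting and the triangle case analysis in your first two paragraphs would have been unnecessary. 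Your route buys nothing extra and costs you the verification that each sub-region satisfies the non-meeting hypothesis of Lemma~\ref{lem:ll}, plus the separate handling of the configurations where $\bm$ joins $\ba$ or $\bbeta$.
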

  \begin{proof} By Lemma \ref{lem:ll}
  $$
  |\nabla u_\infty(\xi)|\geq \max_{\overline{a'b'}} |\nabla u_\infty(\bo)|\geq |\nabla u_\infty(\eta)|
  $$
  and the monotonicity of the speed implies
  $$
 |\nabla u_\infty(\xi)|\leq |\nabla u_\infty(\bm(t))|\leq |\nabla u_\infty(\eta)|
  $$
  along the arc $\overline{\xi\eta}$ of $\bm$. Thus we have equality.
  \end{proof}
  
We can also formulate a similar result for curved triangles. Suppose that the streamlines $\ba$ and $\bbeta$ together with the level curve $\bs$ form a curved triangle with vertices $a,b$ and $c$. Assume again that $\xi\in \overline{ab}$ is a point at which 
  $$
  |\nabla u_\infty(\xi)|= \max_{\overline{ab}} |\nabla u_\infty(\bs)| = M.
  $$
  Let $\bm$ be the streamline that passes through $\xi$. It passes through $c$ (but may have joined $\ba$ or $\bbeta$ before reaching $c$). The following result holds:

\begin{cor}\label{cor:tri} For the triangle $a\,b\,c$
we have
$$
|\nabla u_\infty(\bm)| = M \quad \text{on } \overline{\xi c}.
$$
Moreover, 
$$
 |\nabla u_\infty(c)|= \underset{\overline{ab}}{\max} |\nabla u_\infty(\bs)|.
$$
\end{cor}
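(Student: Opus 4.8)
The plan is to obtain Corollary \ref{cor:tri} from Lemma \ref{lem:quad} by exhausting the curved triangle $abc$ with curved quadrilaterals and passing to the limit. Let $\ell_0$ be the value of $u_\infty$ on the lower level curve $\bs$ and $\ell_c = u_\infty(c)$. Since the derivative of $u_\infty$ along any streamline equals $|\nabla u_\infty|^2$, which is strictly positive in $G$, the value of $u_\infty$ increases strictly along $\ba$ and $\bbeta$; in particular $\ell_0 < \ell_c$. For each $\ell\in(\ell_0,\ell_c)$ the level curve $\{u_\infty=\ell\}$ meets $\ba$ and $\bbeta$ at the single points $a'_\ell$, $b'_\ell$ where $u_\infty$ takes the value $\ell$, and it cuts out of the triangle a curved quadrilateral $Q_\ell$ with vertices $a,b,b'_\ell,a'_\ell$: its lower level is $\bs$ on $\overline{ab}$, its upper level is the arc $\bo_\ell\subset\{u_\infty=\ell\}$, and its sides are the sub-arcs of $\ba$ and $\bbeta$ below level $\ell$. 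Because $\ba$ and $\bbeta$ meet only at $c$, which lies strictly above level $\ell$, they do not meet before reaching $\bo_\ell$, so $Q_\ell$ satisfies the hypothesis of Lemma \ref{lem:ll}, hence of Lemma \ref{lem:quad}.

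The point $\xi$ and the value $M=\max_{\overline{ab}}|\nabla u_\infty(\bs)|$ are the same for every $Q_\ell$, and the streamline through $\xi$ is $\bm$. Write $\bm=\bm(t)$ with $\bm(0)=\xi$, and let $t_c$ be the parameter at which $\bm$ reaches $c$ (the setup preceding the Corollary records that it does, possibly after joining $\ba$ or $\bbeta$). For $\ell\in(\ell_0,\ell_c)$ there is a unique $t_\ell$ with $u_\infty(\bm(t_\ell))=\ell$, and $\eta_\ell=\bm(t_\ell)$ is the point where $\bm$ crosses $\bo_\ell$. Applying Lemma \ref{lem:quad} to $Q_\ell$ gives $|\nabla u_\infty(\bm(t))|=M$ for $0\le t\le t_\ell$ — that conclusion is phrased so as to remain valid even if $\bm$ has already merged with $\ba$ or $\bbeta$. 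Since $u_\infty(\bm(\cdot))$ is a continuous, strictly increasing bijection of $[0,t_c]$ onto $[\ell_0,\ell_c]$, we have $t_\ell\nearrow t_c$ as $\ell\nearrow\ell_c$, so the arcs $\overline{\xi\eta_\ell}$ exhaust $\overline{\xi c}$. Therefore $|\nabla u_\infty(\bm(t))|=M$ for all $0\le t<t_c$.

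Finally, the value at $c$ is forced: by Theorem \ref{thm:speed} the speed $|\nabla u_\infty(\bm(t))|$ is non-decreasing, so with the paper's convention $|\nabla u_\infty(c)|=\lim_{t\to t_c-}|\nabla u_\infty(\bm(t))|$ we get $|\nabla u_\infty(c)|=M$; equivalently, the second assertion of Lemma \ref{lem:quad} gives $\max_{\overline{a'_\ell b'_\ell}}|\nabla u_\infty(\bo_\ell)|=M$, and letting the arc $\bo_\ell$ shrink to $c$ as $\ell\nearrow\ell_c$ yields the same conclusion. This proves both claims of the Corollary.

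The one step deserving attention is the limiting procedure, specifically the assertion that $\eta_\ell\to c$ along $\bm$ — i.e. that $\{Q_\ell\}$ genuinely exhausts the triangle and $\bm$ does not leak out through a side before reaching $c$. This is immediate from the fact that uniqueness of $\infty$-streamlines confines $\bm$ to the region between $\ba$ and $\bbeta$, together with the already-recorded fact that $\bm$ reaches $c$; granting this, the argument is a routine transcription of the proof of Lemma \ref{lem:quad}.
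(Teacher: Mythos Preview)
Your proof is correct and follows essentially the same approach as the paper: exhaust the curved triangle by quadrilaterals cut off by level curves $\bo_\ell$ approaching $c$ from below, apply Lemma \ref{lem:quad} to each, and pass to the limit. The paper's proof is a terse two-line version of exactly this argument; you have simply supplied the details of the limiting step that the paper leaves implicit.
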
  
\begin{proof} Take $\bo_i$ to be a sequence of level curves approaching $c$ from below. Then apply Lemma \ref{lem:quad} on the quadrilateral formed by $\bs, \bo_i, \ba$ and $\bbeta$ and let $i\to \infty$.
\end{proof}

\
  
  \paragraph{The Quadrilateral Rule.}
 We provide a practical rule for preventing meeting points. We keep the same notation.

\begin{prop}[Quadrilateral Rule]\label{prop:quad} If $|\nabla u(\bs(t))|$ is strictly monotone on the arcs $\overline{a\xi}$ and $\overline{\xi b}$ of the level curve $\bs$ (one of them may reduce to a point), then no streamlines can meet inside the quadrilateral. A streamline with initial point on the arc $\overline{ab}$ (but not $a$ or $b$) has constant speed $|\nabla u_\infty|$ till it meets $\ba, \bbeta$ or reaches $\bo$.
\end{prop}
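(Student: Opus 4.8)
The plan is to prove the two assertions in order: first that no two streamlines meet inside the quadrilateral, and then that a streamline starting on the open arc $\overline{ab}$ keeps constant speed until it leaves the quadrilateral. Both will be reduced to Lemma~\ref{lem:quad} (and Lemma~\ref{lem:ll}), exploiting the hypothesis that $|\nabla u_\infty(\bs(t))|$ is strictly monotone on each of the two subarcs $\overline{a\xi}$ and $\overline{\xi b}$, where $\xi$ is the maximum point of the speed on $\overline{ab}$.

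For the first assertion, I would argue by contradiction. Suppose two streamlines $\bm_1$ and $\bm_2$, with initial points $\zeta_1,\zeta_2\in\overline{ab}$, meet at a point $q$ strictly inside the quadrilateral. Since streamlines of $u_\infty$ do not cross (uniqueness of the ascending flow, as recalled from \cite{LL}), $\zeta_1$ and $\zeta_2$ must lie on the same side of $\xi$ on $\bs$; say both are on $\overline{a\xi}$ with $\zeta_1$ between $a$ and $\zeta_2$. Let $\bo$ be a level curve through (or just below) $q$; then $\bm_1$ and $\bm_2$ together with the subarc of $\bs$ between $\zeta_1$ and $\zeta_2$ and the subarc of $\bo$ they cut out form a curved quadrilateral (or, at the meeting point, a curved triangle) to which Corollary~\ref{cor:tri} or Lemma~\ref{lem:quad} applies: the maximal speed on the lower arc equals the speed at $q$. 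But the point of maximal speed on $\overline{\zeta_1\zeta_2}\subset\overline{a\xi}$ is the endpoint $\zeta_2$ (strict monotonicity of $|\nabla u_\infty(\bs)|$ on $\overline{a\xi}$), so $|\nabla u_\infty(q)|=|\nabla u_\infty(\zeta_2)|$; running Lemma~\ref{lem:quad} again on the strip between $\bs$ and $\bo$ bounded by $\bm_2$ and, say, $\ba$ forces the speed to be constant $=|\nabla u_\infty(\zeta_2)|$ along a sub-band, contradicting the strict monotonicity along $\bs$ near $\zeta_2$ (a nearby initial point would carry a strictly smaller constant speed up to the same level, violating the monotone-in-$t$ statement of Theorem~\ref{thm:speed} combined with the reverse inequality of Lemma~\ref{lem:quad}). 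Hence no interior meeting point exists. I would package the one-variable bookkeeping so that the case $\zeta_1,\zeta_2\in\overline{\xi b}$ is symmetric and the degenerate case where one of the two subarcs is a single point is vacuous on that side.

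For the second assertion, fix an initial point $\zeta\in\overline{ab}$, $\zeta\neq a,b$, and let $\bm$ be its streamline. By the first part, $\bm$ meets no other streamline with initial point strictly between $a$ and $b$ before leaving the quadrilateral; so before exiting it can only meet $\ba$ or $\bbeta$ (or reach $\bo$). Say $\zeta\in\overline{a\xi}$ (the other case is symmetric). Consider the curved quadrilateral bounded below by the subarc $\overline{a\zeta}$ of $\bs$, on the sides by $\ba$ and $\bm$, and above by a level arc $\bo'$ taken just below the first exit point of $\bm$. Since $\bm$ does not meet $\ba$ inside this sub-quadrilateral, Lemma~\ref{lem:quad} applies with the role of ``$\overline{ab}$'' played by $\overline{a\zeta}$: its maximum-speed point is $\zeta$ itself (strict monotonicity on $\overline{a\xi}$), and the streamline through that maximizer is $\bm$, so $|\nabla u_\infty(\bm)|\equiv|\nabla u_\infty(\zeta)|$ along $\overline{\zeta\,(\text{top})}$. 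Letting $\bo'$ tend to the exit point gives constant speed along $\bm$ up to the moment it meets $\ba$, $\bbeta$, or reaches $\bo$, which is exactly the claim; afterwards Theorem~\ref{thm:speed} gives non-decreasing speed.

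The main obstacle I anticipate is the first step — ruling out interior meeting points — because one must correctly set up the auxiliary quadrilateral/triangle at a hypothetical meeting point and then feed the resulting ``constant speed'' conclusion back against the strict monotonicity along $\bs$; the subtlety is that Lemma~\ref{lem:quad} yields equality only along the specific streamline through the lower maximizer, so to reach a contradiction one needs the sharper observation that \emph{every} streamline issuing from a neighborhood of $\zeta_2$ on $\overline{a\xi}$ has constant speed up to the level of $q$ (obtainable by applying Lemma~\ref{lem:quad} to thin sub-quadrilaterals), and those constants are strictly increasing in the initial parameter — contradicting the monotone profile on $\bs$. Handling the degenerate configurations (a subarc collapsing to a point, or $\bm$ coinciding with $\ba$ or $\bbeta$ from the start) is routine once this core argument is in place.
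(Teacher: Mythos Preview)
Your ingredients are right, but the order is inverted relative to the paper and, more importantly, your argument for ``no meeting'' does not close. The paper proves the \emph{constant speed} assertion first, via a three-case analysis of where a streamline $\bl$ through $x\in\overline{\xi b}$ can go: (1) it meets the central streamline $\bm$ (the one from the maximizer $\xi$); (2) it meets $\bbeta$; (3) it reaches $\bo$. Case~(1) is the decisive step you are missing. Applying Lemma~\ref{lem:quad} to the quadrilateral $x\,b\,b'\,\eta$ (with $\bl$ continued along $\bm$ up to $\eta$) gives $|\nabla u_\infty(\bl)|\equiv|\nabla u_\infty(x)|$ on the whole arc $\overline{x\eta}$; but part of that arc lies on $\bm$, which already has constant speed $M=|\nabla u_\infty(\xi)|$ by Lemma~\ref{lem:quad} applied to the full quadrilateral. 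Hence $|\nabla u_\infty(x)|=|\nabla u_\infty(\xi)|$, contradicting strict monotonicity on $\overline{\xi b}$. So no streamline meets $\bm$ inside the quadrilateral. Cases~(2)--(3) then give constant speed exactly as in your second-assertion argument. The ``no meeting'' conclusion is then a one-liner: two meeting streamlines must share the same constant speed, hence the same initial speed on $\bs$, hence (by strict monotonicity on each half-arc) their initial points lie on \emph{opposite} sides of $\xi$ --- impossible, since $\bm$ separates the two sides and nobody meets $\bm$.

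Your attempted contradiction does not go through at the point you yourself flagged. From the triangle $\zeta_1\,\zeta_2\,q$ with $\zeta_1,\zeta_2\in\overline{a\xi}$, Corollary~\ref{cor:tri} gives $|\nabla u_\infty(q)|=|\nabla u_\infty(\zeta_2)|$ and constant speed along $\bm_2$ --- but says nothing about $\bm_1$, nor about any ``sub-band'' on $\bs$. The speed along $\bm_1$ is free to increase from $|\nabla u_\infty(\zeta_1)|$ to $|\nabla u_\infty(\zeta_2)|$ at $q$; this is perfectly consistent with Theorem~\ref{thm:speed}, Lemma~\ref{lem:ll}, and strict monotonicity on $\bs$. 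The leverage you need is precisely that $\bm$ carries the \emph{global} maximum $M$: a streamline joining $\bm$ is forced to speed $M$ while simultaneously (via Lemma~\ref{lem:quad} on the $b$-side sub-quadrilateral) shown to have constant speed equal to its \emph{own} initial value --- that clash is the contradiction. Once you prove Case~(1), your sub-quadrilateral argument for constant speed is correct and matches the paper's Cases~(2)--(3); but note it tacitly uses Case~(1) too, since for $\zeta\in\overline{a\xi}$ you must know the exit is through $\ba$ or $\bo$ and not through $\bm$.
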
  

\begin{proof} Let $\bl= \bl(t)$ be a streamline passing through the point $x\in \overline{\xi b}$, $x\neq \xi$ on the level curve $\bs$.

\begin{figure}[h!]
  \begin{center}
 
	\begin{tikzpicture}[domain=-5:5,scale=0.6]
	       
\fill (3,0) circle[radius=2pt] node[below] {$b$};
\fill (-3,0) circle[radius=2pt] node[below] {$a$};
\fill (2,4) circle[radius=2pt] node[above] {$b'$};
\fill (-2,4) circle[radius=2pt] node[above] {$a'$};
\fill (0.5,3.25) circle[radius=2pt] node[above] {$\eta$};
\fill (0,-0.75) circle[radius=2pt] node[below] {$\xi$};
\draw (0,-0.75) .. controls (0.3,1) .. (0.5,3.25); \fill (0.3,1) node[left] {$\bm$};
\draw (-3,0) .. controls (0,-1) .. (3,0);\fill (-1.2,-0.8) node[below] {$\bs$};
\draw (-3,0) .. controls (-2,2) .. (-2,4);\fill (-2.2,2) node[left] {$\ba$};
\draw (3,0) .. controls (3,2) .. (2,4);\fill (3,2) node[right] {$\bbeta$};
\draw (2,4) .. controls (0.5,3) .. (-2,4); \fill (-0.5,3.6) node[above] {$\bo$};

\fill (1.5,-0.48) circle[radius=2pt] node[below] {$x$};
\draw[color=red, thick] (1.5,-0.48) to[out=85,in=-92] (0.35,1.6);
\fill (0.35,1.6) circle[radius=2pt] node[right] {$y$};



	\end{tikzpicture}
	
\end{center}
\caption{Case 1: impossible}
	\end{figure}
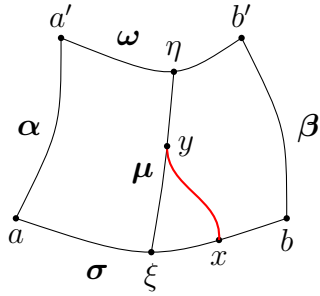

We have three cases: 1) If $\bl$ meets $\bm$ at the point $y$, then Lemma \ref{lem:quad}  applied on the quadrilateral $x b b'\eta y x$ (or Corollary \ref{cor:tri} if $\bm$ meets $\bbeta$, so that we have a triangle) implies
$$
M=|\nabla u_\infty(\bl)|
$$
on the whole arc $\overline{x\eta}$ of $\bl$ (or until $\bm$ reaches $\bbeta$). But then
$$
|\nabla u_\infty(\xi)| = |\nabla u_\infty(x)|,
$$
which contradicts the \emph{strict} monotonicity of $|\nabla u(\bs(t))|$.

\begin{figure}[h!]
  \begin{center}
 
	\begin{tikzpicture}[domain=-5:5,scale=0.6]
	       
\fill (3,0) circle[radius=2pt] node[below] {$b$};
\fill (-3,0) circle[radius=2pt] node[below] {$a$};
\fill (2,4) circle[radius=2pt] node[above] {$b'$};
\fill (-2,4) circle[radius=2pt] node[above] {$a'$};
\fill (0.5,3.25) circle[radius=2pt] node[above] {$\eta$};
\fill (0,-0.75) circle[radius=2pt] node[below] {$\xi$};
\draw (0,-0.75) .. controls (0.3,1) .. (0.5,3.25); \fill (0.3,1) node[left] {$\bm$};
\draw (-3,0) .. controls (0,-1) .. (3,0);\fill (-1.2,-0.8) node[below] {$\bs$};
\draw (-3,0) .. controls (-2,2) .. (-2,4);\fill (-2.2,2) node[left] {$\ba$};
\draw (3,0) .. controls (3,2) .. (2,4);\fill (3,2) node[right] {$\bbeta$};
\draw (2,4) .. controls (0.5,3) .. (-2,4); \fill (-0.5,3.6) node[above] {$\bo$};

\fill (1.5,-0.48) circle[radius=2pt] node[below] {$x$};


\draw[color=blue, thick] (1.5,-0.48) to[out=80,in=-72] (2.58,2.8);
\fill (2.58,2.8) circle[radius=2pt] node[right] {$y$};

	\end{tikzpicture}
	
\end{center}
\caption{Case 2: possible}
	\end{figure}
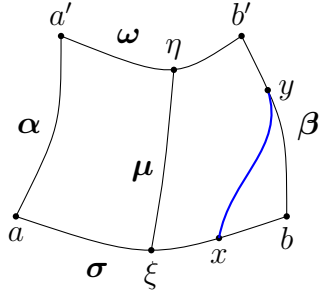

2) If $\bl$ meets $\bbeta$ at $y\in \overline{bb'}$, then Corollary \ref{cor:tri}  applied on the triangle $xby$ yields
$$
|\nabla u_\infty(\bl)| = \text{constant}
$$
on the arc $\overline{xy}$. 

\begin{figure}[h!]
  \begin{center}
 
	\begin{tikzpicture}[domain=-5:5,scale=0.6]
	       
\fill (3,0) circle[radius=2pt] node[below] {$b$};
\fill (-3,0) circle[radius=2pt] node[below] {$a$};
\fill (2,4) circle[radius=2pt] node[above] {$b'$};
\fill (-2,4) circle[radius=2pt] node[above] {$a'$};
\fill (0.5,3.25) circle[radius=2pt] node[above] {$\eta$};
\fill (0,-0.75) circle[radius=2pt] node[below] {$\xi$};
\draw (0,-0.75) .. controls (0.3,1) .. (0.5,3.25); \fill (0.3,1) node[left] {$\bm$};
\draw (-3,0) .. controls (0,-1) .. (3,0);\fill (-1.2,-0.8) node[below] {$\bs$};
\draw (-3,0) .. controls (-2,2) .. (-2,4);\fill (-2.2,2) node[left] {$\ba$};
\draw (3,0) .. controls (3,2) .. (2,4);\fill (3,2) node[right] {$\bbeta$};
\draw (2,4) .. controls (0.5,3) .. (-2,4); \fill (-0.5,3.6) node[above] {$\bo$};

\fill (1.5,-0.48) circle[radius=2pt] node[below] {$x$};

\draw[color=blue, thick] (1.5,-0.48) .. controls (1.8,2) .. (1.3,3.55); 
\fill (1.3,3.55) circle[radius=2pt] node[above] {$y$};


	\end{tikzpicture}
	
\end{center}
\caption{Case 3: possible}
	\end{figure}
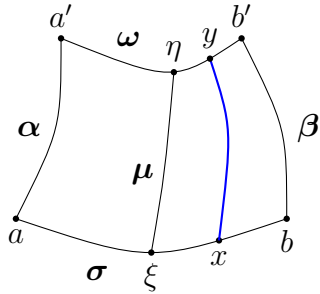

3) If $\bl$ passes through a point $y\in \overline{\eta b'}$ on the upper level $\bo$, $y\neq \eta$, $y\neq b'$, then Lemma \ref{lem:quad} applied on the quadrilateral $xbb'y$ (or Corollary \ref{cor:tri} in case of a curved triangle) yields
$$
|\nabla u_\infty(\bl)| = \text{constant}
$$
on the arc $\overline{xy}$.

Finally, if $x$ is chosen from the left level arc $\overline{a\xi}$, the proof consists of three similar cases again. Thus we have established that $\bl$ has constant speed till it first meets
$\ba, \bbeta$, or hits $\bo$.

It remains to show that no two streamlines can meet in the quadrilateral. 
A streamline $\bl$ passing through the point $x$ at the level curve $\bs$ has constant speed
$$
|\nabla u_{\infty}(x)| = |\nabla u_{\infty}(\bl)|
$$
till $\bl$ meets $\ba$, $\bbeta$ or hits $\bo$. But two meetings streamlines must have the same speed, which requires that they pass through $\bs$ at two points with the same speed $|\nabla u_{\infty}|$. By the \emph{strict} monotonicity of $|\nabla u_{\infty}(\bs)|$, this would require that the points are on different arcs $\overline{a\xi}$ and $\overline{\xi b}$. This is impossible, since no streamlines meet $\bm$. 
\end{proof}

The Quadrilateral Rule remains true if the monotonicity of $|\nabla u_{\infty}(\bs)|$ is not supposed to be strict. If $|\nabla u_{\infty}(\bs)|$ is constant on some subarc $\overline{cd}$, then the streamlines with initial points on $\overline{cd}$ are non-intersecting straight lines. To see this, we again consider the quadrilateral $a\,b\,b'\,a'$ bounded by $\ba,\bbeta,\bs,\bo.$

\begin{lemma}\label{lem:constant} 
  Assume  that $|\nabla u_\infty (\bs)|$ is constant on the arc $\overline{ab}$. Then no streamlines can meet inside the quadrilateral. Moreover,  $|\nabla u_\infty|$ is constant in the quadrilateral and all streamlines are straight lines.
\end{lemma}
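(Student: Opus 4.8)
The plan is to bootstrap from the Quadrilateral Rule (Proposition \ref{prop:quad}) by approximating the constant-speed situation with strictly monotone ones. First I would fix a value $M = |\nabla u_\infty(\bs)|$ on the whole arc $\overline{ab}$ and observe, via Lemma \ref{lem:ll}, that the maximal speed on the upper level $\overline{a'b'}$ is at most $M$; combined with the monotonicity along each streamline from Theorem \ref{thm:speed}, every streamline issuing from a point of $\overline{ab}$ has speed pinned between $M$ (its value on $\bs$) and the at-most-$M$ value it can have on $\bo$. Hence $|\nabla u_\infty| \equiv M$ along each such streamline for the whole passage from $\bs$ to $\bo$. Since every point of the quadrilateral lies on exactly one such streamline (streamlines foliate the region between the two level curves, because each point of the region sits on the unique ascending streamline through it, which must cross $\bs$ below and $\bo$ above by the termination result of \cite{LL}), we conclude $|\nabla u_\infty| \equiv M$ throughout the quadrilateral.

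Next, constant speed forces the streamlines to be straight lines. Along a streamline $\ba$, $\frac{d\ba}{dt} = \nabla u_\infty(\ba(t))$ has constant norm $M$; to see the direction is constant, I would use the identity established in the proof of Theorem \ref{thm:speed}, namely $F_p(t) = u_p(\ba_p(t))$ is convex with $F_p'(t) = |\nabla u_p(\ba_p(t))|^2$, and $F_p \to F$ with $F'(t) = |\nabla u_\infty(\ba(t))|^2 \equiv M^2$. So $F$ is affine on the relevant interval, i.e.\ $\frac{d^2}{dt^2}u_\infty(\ba(t)) = 0$ in the limiting sense. The cleaner route, though, is to argue at the level of the $p$-approximations and pass to the limit: on the quadrilateral the equicontinuity (Theorem \ref{thm:eqcont}) plus $|\nabla u_\infty| = M$ there means $|\nabla u_p| \to M$ uniformly on compact subsets; feeding this into $\ba_p(t_2)-\ba_p(t_1) = \int_{t_1}^{t_2}\nabla u_p(\ba_p(t))\,dt$ and then into the Cauchy–Schwarz squeeze from the proof of Theorem \ref{thm:speed} shows $\frac{d\ba}{dt} = \nabla u_\infty(\ba(t))$ with constant norm; but constant speed together with the fact that $t \mapsto u_\infty(\ba(t))$ is affine (its derivative is $M^2$) forces, after re-parametrizing by arclength, that $\ba$ travels in a fixed direction — otherwise the level curves $\{u_\infty = c\}$ crossed by $\ba$ could not be met orthogonally with the required spacing. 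So $\ba$ is a line segment.

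Finally, non-intersection inside the quadrilateral: two streamlines that meet must agree thereafter (uniqueness of ascending streamlines, \cite{LL}), hence they must have entered with the same speed and then merged, so they met along a common straight segment; tracing backward, two straight segments that share a sub-segment are the same line, so they already coincided before entering the quadrilateral — contradiction unless they are the same streamline. Alternatively, and more in the spirit of Proposition \ref{prop:quad}: since all streamlines here are parallel-in-direction straight segments crossing the strictly nested convex level curves $\bs$ and $\bo$, distinct entry points on $\overline{ab}$ give distinct lines, and two distinct parallel lines cannot meet. The main obstacle I anticipate is the rigidity step — upgrading "$|\nabla u_\infty|$ constant along the streamline" to "the streamline is a straight line" — because it genuinely uses the geometry (convexity of level curves, orthogonality of streamlines to level curves, correct spacing $u_\infty(\ba(t)) = c_0 + M^2(t-t_0)$) rather than a soft argument; everything else is a direct assembly of Lemma \ref{lem:ll}, Lemma \ref{lem:quad}, Theorem \ref{thm:speed} and the foliation property.
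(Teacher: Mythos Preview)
Your first step---showing $|\nabla u_\infty|\equiv M$ throughout the quadrilateral---is correct and is exactly what the paper does: Lemma~\ref{lem:quad} forces the speed on $\bo$ to equal $M$ as well, so on every streamline through the region the speed is squeezed between $M$ below and $M$ above, and these streamlines cover the quadrilateral.

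The gap is precisely the rigidity step you flagged. None of your three proposed routes actually closes it. Knowing that $|\nabla u_\infty(\ba(t))|=M$ and hence $u_\infty(\ba(t))=c_0+M^2 t$ along a single streamline does \emph{not} force $\tfrac{d\ba}{dt}$ to be a constant vector: a $C^1$ curve of constant speed can bend, and the information ``$F$ is affine'' is only scalar. Your appeal to ``level curves crossed orthogonally with the required spacing'' is the right geometric intuition but is not a proof at the $C^1$ level; passing through the $p$-approximants adds nothing, since the difficulty concerns the limit object itself. The paper resolves this in one line by recognising that $|\nabla u_\infty|\equiv M$ on an open set is the \emph{Eikonal equation}, and then invoking Proposition~\ref{prop:eikonal} (Aronsson's Lemma~1 in \cite{A2}): any $C^1$ solution of $|\nabla v|=C$ has streamlines that are non-intersecting straight segments. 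This single citation delivers both the straightness and the non-meeting conclusion simultaneously; you are in effect trying to reprove Aronsson's lemma ad hoc.

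Your fallback non-intersection argument also contains an error: the streamlines are \emph{not} ``parallel-in-direction'' in general. They are normal to the convex level arc $\overline{ab}$, whose normal direction varies unless the arc is a straight segment (think of $u=M(R-|x|)$ on an annulus: radial streamlines are straight but not parallel). So ``two distinct parallel lines cannot meet'' is not available here; you really need the Eikonal result.
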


\begin{proof} By Lemma \ref{lem:quad}, $|\nabla u_{\infty} (\bo)|$ is constant on the upper arc  $\overline{a'b'}$ .
  In particular, $|\nabla u_{\infty}|$ must be constant along $\ba$ and $\bbeta$. Then $|\nabla u_{\infty}|$ must be constant along any arc of a streamline passing through the quadrilateral. Every  point inside the quadrilateral lies on such a streamline. Therefore $|\nabla u_{\infty}|$ is constant in the quadrilateral, which means that it solves the \emph{Eikonal Equation}. Since $u_{\infty}$ is of class $C^1$, we can apply the next proposition
    to conclude that all streamlines are non-intersecting straight lines.
\end{proof}

\begin{prop} [Eikonal Equation]\label{prop:eikonal} Suppose that $v \in C^1(D)$ is a solution of the Eikonal Equation $|\nabla v| = C$ in the domain $D$, where $C$ denotes a constant. Then the streamlines of $v$ are non-intersecting segments of straight lines.
\end{prop}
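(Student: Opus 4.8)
The plan is to show first that the gradient field $\nabla v/C$ is a unit vector field whose integral curves (the streamlines) are straight lines, and then to rule out intersections. Since $v\in C^1(D)$ and $|\nabla v|\equiv C>0$ (the case $C=0$ is trivial: there are no streamlines, or $v$ is constant), the normalized field $e(x) = \nabla v(x)/C$ is a continuous unit vector field. A streamline $\ba$ solves $\ba'(t) = \nabla v(\ba(t))$, so it moves at constant speed $C$; reparametrizing by arclength, $\ba'(s) = e(\ba(s))$. The first key step is to prove that along such a curve the direction $e(\ba(s))$ is constant, which forces $\ba$ to be a straight segment.

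For the straightness, I would argue as follows. Along a streamline, $\frac{d}{ds} v(\ba(s)) = \langle \nabla v(\ba(s)), e(\ba(s))\rangle = |\nabla v(\ba(s))| = C$, so $v$ increases at unit rate (times $C$) and in particular strictly along the streamline. Now consider two nearby points on the curve; the straightness is really the statement that streamlines are the gradient lines of a function solving the eikonal equation, a classical fact. The cleanest route avoiding second derivatives: fix $x_0\in D$, let $\ba$ be the streamline through $x_0$, and compare $v$ with the affine function $\ell(x) = v(x_0) + \langle e(x_0), x - x_0\rangle$. Since $|\nabla v| = C = |\nabla \ell|$ and both are $C^1$, one shows $v \le \ell$ is impossible to violate in the direction $e(x_0)$ because the only way to increase $v$ at the maximal rate $C$ is to move exactly in the direction $\nabla v$, and a short continuity/bootstrap argument pins the direction. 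Concretely: the function $s\mapsto e(\ba(s))$ has, by $C^1$-regularity of $v$, no a priori derivative, so instead I would use that $v(\ba(s)) - v(\ba(0)) = Cs$ while also $v(\ba(s)) - v(\ba(0)) = \langle \nabla v(\xi), \ba(s)-\ba(0)\rangle$ for some intermediate considerations — better, use that $|\ba(s)-\ba(0)| \le s$ (unit speed) and $v(\ba(s))-v(\ba(0)) = Cs \le C|\ba(s)-\ba(0)| \le Cs$ via $|v(y)-v(x)|\le C|y-x|$ (Lipschitz bound from $|\nabla v|=C$), forcing $|\ba(s)-\ba(0)| = s$, i.e. the chord length equals the arclength, which for a rectifiable curve means it is a straight segment. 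This is the heart of the matter and I expect the Lipschitz/chord-equals-arc argument to be the main technical point to state carefully.

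Next, non-intersection. Suppose two streamlines $\ba$ and $\bbeta$ pass through a common point $q\in D$. Since each is a straight line through $q$ with direction $e(q)$ (the direction at any point of a streamline equals $\nabla v$ at that point, hence at $q$ both have direction $\nabla v(q)/C = e(q)$), the two lines coincide as point sets near $q$; running the same argument along the shared line shows they coincide wherever both are defined. Hence distinct streamlines are disjoint. One should also note that a single streamline cannot self-intersect, since $v$ is strictly monotone along it (increasing at rate $C$), so it takes distinct values at distinct parameters.

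I would organize the write-up as: (i) reduce to $C>0$; (ii) unit-speed reparametrization and the Lipschitz estimate $|v(x)-v(y)|\le C|x-y|$; (iii) chord-equals-arclength $\Rightarrow$ straight segment; (iv) direction at any point is $\nabla v$ there $\Rightarrow$ two streamlines through a common point have the same direction there $\Rightarrow$ they coincide locally, hence globally, giving non-intersection; (v) strict monotonicity of $v$ along a streamline rules out self-intersection. The only delicate point is (iii), converting the metric equality "chord $=$ arclength" into "the curve is a line segment"; this follows because for a Lipschitz (indeed $C^1$, since $\ba' = e\circ\ba$ is continuous) curve parametrized by arclength, $|\ba(s)-\ba(0)| = s$ for all small $s$ forces $\ba'(s)$ to be the constant unit vector $(\ba(s)-\ba(0))/s$ in the limit, and a short argument (or the equality case in the triangle inequality for the integral $\ba(s)-\ba(0) = \int_0^s \ba'(\tau)\,d\tau$ of unit vectors) gives $\ba'\equiv e(\ba(0))$.
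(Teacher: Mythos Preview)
Your argument is correct. The paper itself does not give a proof here: it simply cites Lemma~1 in Aronsson~\cite{A2}. Your chord-equals-arclength route is a clean, self-contained alternative: from $v(\ba(s))-v(\ba(0))=Cs$ along the unit-speed streamline together with the Lipschitz bound $|v(y)-v(x)|\le C|y-x|$ you force $|\ba(s)-\ba(0)|=s$, and the equality case in $\bigl|\int_0^s\ba'\,d\tau\bigr|=\int_0^s|\ba'|\,d\tau$ for unit vectors yields $\ba'\equiv\text{const}$. One small point worth making explicit in the write-up: the Lipschitz inequality is only guaranteed when the segment $[x,y]$ lies in $D$, so the argument is first local (in a ball); a $C^1$ curve that is locally a straight segment is then globally one inside $D$. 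Your non-intersection step is also fine: two streamlines through a common point $q$ are straight lines with the same direction $\nabla v(q)/C$, hence coincide, and this neatly sidesteps the lack of Picard--Lindel\"of uniqueness (since $\nabla v$ is merely continuous). What your approach buys over the bare citation is a fully elementary, second-derivative-free proof in the spirit of the rest of the paper.
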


\begin{proof} A very appeling direct proof is given in Lemma 1 in \cite{A2}.
\end{proof}

For the next result we abandon the \emph{strict} monotonicity in Proposition \ref{prop:quad}.

\begin{cor}[Quadrilateral Rule] \label{cor:constant} Assume 
  that $|\nabla u_\infty (\bs)|$ is monotone on the arc $\overline{ab}$. Then no streamlines can meet inside the quadrilateral. A streamline with initial point on the arc $\overline{ab}$ (but not $a$ or $b$) has constant speed till it meets $\ba$, $\bbeta$ or reaches $\bo$.
\end{cor}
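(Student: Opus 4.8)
### Proof plan for Corollary \ref{cor:constant}

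The plan is to deduce the non-strict case from the strict case (Proposition \ref{prop:quad}) together with the constant case (Lemma \ref{lem:constant}) by subdividing the level arc $\overline{ab}$ into pieces on which $|\nabla u_\infty(\bs)|$ is either strictly monotone or constant. First I would observe that since $|\nabla u_\infty(\bs)|$ is monotone on $\overline{ab}$, the subset where it is locally constant is a (possibly empty, possibly countable) union of maximal closed subarcs; on each of these we are in the situation of Lemma \ref{lem:constant}, and on each complementary subarc the function is strictly monotone, so Proposition \ref{prop:quad} applies. The key point is that the conclusions — constant speed along the streamline until it meets $\ba$, $\bbeta$, or reaches $\bo$, and no meeting points inside the quadrilateral — are of a \emph{local} nature near the initial point on $\overline{ab}$, so they transfer from the sub-quadrilaterals back to the whole quadrilateral.

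The main steps, in order, would be: (1) Fix an initial point $x$ on the open arc $\overline{ab}$ and split into two cases. If $x$ lies in the interior of a subarc $\overline{cd}\subset\overline{ab}$ on which $|\nabla u_\infty(\bs)|$ is constant, apply Lemma \ref{lem:constant} to the sub-quadrilateral cut out by the streamlines $\bg_c$, $\bg_d$ through $c$ and $d$ and the arcs $\bs$, $\bo$: inside it $|\nabla u_\infty|$ is constant and all streamlines are straight lines, so in particular the streamline $\bl$ through $x$ has constant speed until it exits this sub-quadrilateral, which can only happen by meeting $\bg_c$ or $\bg_d$ — but these lie inside the original quadrilateral, so at that point we may re-apply the argument (or simply invoke Theorem \ref{thm:speed} monotonicity plus Lemma \ref{lem:quad} as in the proof of Lemma \ref{lem:quad}) to continue. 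If instead $x$ lies in (the closure of) an arc of strict monotonicity, apply Proposition \ref{prop:quad} directly to the corresponding sub-quadrilateral and obtain the same conclusion locally, again patching at the bounding streamlines. (2) For the global ``no meeting points'' claim: two streamlines meeting inside the quadrilateral would, by Theorem \ref{thm:speed} and the constant-speed conclusion just established, have to pass through $\bs$ at two points carrying the same value of $|\nabla u_\infty|$; by monotonicity these two points would have to belong to a common maximal constant subarc $\overline{cd}$ — but then both streamlines lie in the sub-quadrilateral over $\overline{cd}$, where Lemma \ref{lem:constant} already forbids meeting points. This yields the contradiction.

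The one technical nuisance — and the step I expect to require the most care — is the ``patching'' at the boundary streamlines $\bg_c,\bg_d$ of a constant subarc: a priori a streamline $\bl$ starting strictly between $c$ and $d$ could fail to be separated from $\bg_c$ and $\bg_d$, or could accumulate on them, so one must argue that $\bl$ genuinely has constant speed up to the \emph{first} contact with $\ba$, $\bbeta$, or $\bo$, not merely up to contact with $\bg_c$ or $\bg_d$. The clean way around this is to note that once $\bl$ reaches the streamline $\bg_c$ (say) it simply coincides with $\bg_c$ from then on by uniqueness of $\infty$-streamlines, and $\bg_c$ itself — being a streamline with initial point at an endpoint of a constant arc — carries the same constant speed $M=|\nabla u_\infty(\bs)|$ along $\overline{cd}$'s worth of level by Lemma \ref{lem:constant}, and continues with that speed at least until it meets $\ba$, $\bbeta$, or $\bo$; combining with the monotonicity of Theorem \ref{thm:speed} (which gives $\geq M$) and Lemma \ref{lem:quad} (which gives $\leq M$ on the relevant sub-quadrilateral) pins the speed to $M$ throughout. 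With that observation the bookkeeping over the at-most-countably-many subarcs is routine, and the corollary follows.
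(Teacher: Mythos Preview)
Your plan has a real gap in step~(1): a continuous monotone function on an arc does \emph{not} in general decompose into subarcs of strict monotonicity and subarcs of constancy. Think of a Cantor-staircase profile for $|\nabla u_\infty(\bs)|$ along $\overline{ab}$: the set where it is locally constant is a dense open set whose complement is a Cantor-type set with empty interior, so there are no ``arcs of strict monotonicity'' at all, and a point $x$ in that complement lies neither in the interior of a constant subarc nor in the closure of any strict-monotonicity arc. Your two cases are therefore not exhaustive, and nothing in the paper supplies enough regularity of $|\nabla u_\infty(\bs)|$ to exclude this behaviour. The subsequent ``patching'' over countably many sub-quadrilaterals inherits the same defect.

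The paper's proof sidesteps decomposition entirely. Assuming two streamlines $\ba^1,\ba^2$ through $x^1,x^2$ meet at a point $c$ on a level line $\widetilde\bo$ below $\bo$, it applies Lemma~\ref{lem:quad} directly to each sub-quadrilateral $a\,x^j\,c\,a''$: since $|\nabla u_\infty(\bs)|$ is non-decreasing on $\overline{a\,x^j}$, the maximum there sits at $x^j$, so Lemma~\ref{lem:quad} yields $|\nabla u_\infty(\ba^j)|\equiv |\nabla u_\infty(c)|$ along the whole arc from $x^j$ to $c$, with \emph{no} strictness hypothesis needed. This forces $|\nabla u_\infty|$ to be constant on $\overline{x^1 x^2}$ and hence throughout the curved triangle $x^1\,x^2\,c$, so the Eikonal equation holds there and Proposition~\ref{prop:eikonal} produces a contradiction at the apex $c$. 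Your step~(2) is essentially this Eikonal endgame; the correct repair of your plan is simply to drop the faulty decomposition in step~(1) and observe that Lemma~\ref{lem:quad}, applied to the sub-quadrilateral bounded by $\ba$ and the streamline through $x$, already delivers the constant-speed conclusion for every $x$ from plain monotonicity.
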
  

\begin{proof} Assume that $|\nabla u_{\infty} (\bs)|$ is non-decreasing. Consider the subarc $\overline{x^1x^2}$ on $\bs$ so that $|\nabla u_{\infty}(x^1)| \leq  |\nabla u_{\infty}(x^2)|$, where $x_1 < x_2$.  Let $\ba^j$ be the streamline passing through $x^j$. 
We claim that $\ba^1$ does not meet $\ba^2$ inside the quadrilateral. Indeed, suppose they meet at a point $c$ at the level line $\widetilde \bo$ before reaching $\bo$, where $\widetilde \bo$ intersects $\ba$ and $\bbeta$ at $a''$ and $b''$ respectively. Then Lemma \ref{lem:quad} applied to the quadrilaterals $a\,x^1\,c\,a''$ and  $a\,x^2\,c\,a''$  exhibit that the speeds
  $$|\nabla u_{\infty}(\ba^1(t))| = |\nabla u_{\infty}(\ba^2(t))| = |\nabla u_{\infty}(c)|$$
are constant along  the arcs. Again we see that the Eikonal Equation is valid in the triangle
  $x^1\,x^2\,c$. At the point $c$ this leads to a contradiction with Proposition \ref{prop:eikonal}. (Thus the eventual point $c$ must lie on $\bo$ and on $\partial K$.)
  
\end{proof}

    \paragraph{The Triangular Rule.} 
The above results may be formulated for a curved triangle as in Figure \ref{fig:tribasic} (seen as a degenerate quadrilateral).  Again,  suppose that the streamlines $\ba$ and $\bbeta$ together with the level curve $\bs$ form a curved triangle with vertices $a,b$ and $c$; $c$ is the meeting point of $\ba$ and $\bbeta$.  Assume that $\xi\in \overline{ab}$ is a point at which 
  $$
  |\nabla u_\infty(\xi)|= \max_{\overline{ab}} |\nabla u_\infty(\bs)| = M.
  $$
  Let $\bm$ be the streamline that passes through $\xi$. It passes through $c$ (but may have joined $\ba$ or $\bbeta$ before reaching $c$). 
 \begin{figure}[h!]
  
  \begin{center}
 
	\begin{tikzpicture}[domain=-5:5,scale=1.2]
	       
\fill (3,0) circle[radius=2pt] node[below] {$b$};
\fill (-3,0) circle[radius=2pt] node[below] {$a$};
\fill (0.5,3.25) circle[radius=2pt] node[right] {$c$};
\fill (0,-0.75) circle[radius=2pt] node[below] {$\xi$};
\draw (0,-0.75) .. controls (0.3,1) .. (0.5,3.25); \fill (0.3,1) node[left] {$\bm$};
\draw (-3,0) .. controls (0,-1) .. (3,0);\fill (-1.2,-0.8) node[below] {$\bs$};
\draw (-3,0) to [out=75, in =-95](0.5,3.25);\fill (-1.5,2) node[left] {$\ba$};
\draw (3,0) to [out=105, in =-95](0.5,3.25);\fill (2,2) node[right] {$\bbeta$};




	\end{tikzpicture}
	
\end{center}
\caption{The curved triangle $abc$.}
\label{fig:tribasic}  
	\end{figure}
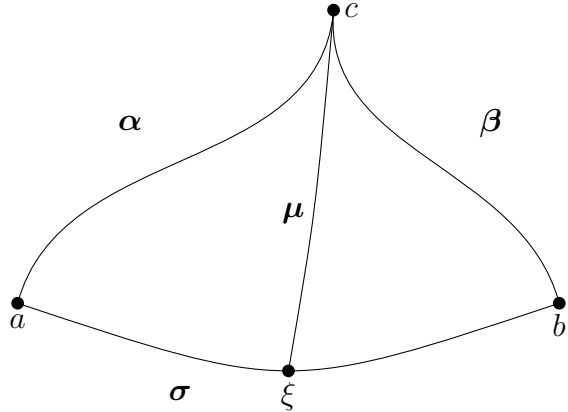
  By simply using the results for quadrilaterals, we may deduce the following.

\begin{cor}\label{cor:tricor} If $|\nabla u(\bs(t))|$ is strictly monotone on the arcs $\overline{a\xi}$ and $\overline{\xi b}$ of the level curve $\bs$ (one of them may reduce to a point), then no streamlines can meet inside the triangle. A streamline with initial point on the arc $\overline{ab}$ (but not $a$ or $b$) has constant speed $|\nabla u_\infty|$ till it meets $\ba$ or $ \bbeta$.
\end{cor}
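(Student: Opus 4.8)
The plan is to deduce Corollary \ref{cor:tricor} from the already-established Quadrilateral Rule (Proposition \ref{prop:quad}) by viewing the curved triangle $abc$ as a limiting case of a curved quadrilateral. Concretely, I would fix a decreasing sequence of level curves $\bo_i$ converging to the vertex $c$ from below; each $\bo_i$ meets $\ba$ at some point $a_i'$ and $\bbeta$ at some point $b_i'$, so that $\ba$, $\bbeta$, $\bs$, and $\bo_i$ bound a genuine curved quadrilateral $a\,b\,b_i'\,a_i'$ containing the triangle's interior near the base. Since the hypothesis is exactly that $|\nabla u_\infty(\bs(t))|$ is strictly monotone on each of the two subarcs $\overline{a\xi}$ and $\overline{\xi b}$, Proposition \ref{prop:quad} applies verbatim to this quadrilateral.

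Next I would transfer the two conclusions. First, no streamlines meet inside the quadrilateral $a\,b\,b_i'\,a_i'$, for every $i$; since every point in the interior of the triangle $abc$ lies in one of these quadrilaterals for $i$ large enough, no streamlines can meet inside the triangle. (The only possible meeting point is $c$ itself, which lies on no $\bo_i$.) Second, a streamline $\bl$ with initial point on $\overline{ab}$, other than $a$ or $b$, has constant speed until it meets $\ba$, $\bbeta$, or reaches $\bo_i$; letting $i\to\infty$, the level curves $\bo_i$ recede to $c$, so the alternative ``reaches $\bo_i$'' is absorbed into ``reaches $c$''. Thus $\bl$ has constant speed $|\nabla u_\infty|$ until it meets $\ba$ or $\bbeta$ (possibly only at the common terminal point $c$), which is the assertion of the corollary. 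One should also note, as in the proof of Corollary \ref{cor:tri}, that $\bm$ itself passes through $c$ and carries the constant speed $M$ on $\overline{\xi c}$, a fact inherited from Lemma \ref{lem:quad} applied on the same quadrilaterals $a\,b\,b_i'\,a_i'$ in the limit.

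The only point requiring a little care — and the step I expect to be the main (minor) obstacle — is the geometric claim that the curved triangle is genuinely exhausted by the quadrilaterals $a\,b\,b_i'\,a_i'$, i.e.\ that every interior point of the triangle lies strictly below some $\bo_i$. This is clear when $\ba$ and $\bbeta$ meet transversally or share a final common arc ending at $c$, because the level function $u_\infty$ is continuous and strictly increasing along streamlines, so the level values $u_\infty(\bo_i)\nearrow u_\infty(c)$ and any interior point $z$ has $u_\infty(z) < u_\infty(c)$; then $z$ lies below $\bo_i$ for all sufficiently large $i$, and the segment of its streamline up to $\bo_i$ sits inside the quadrilateral. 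If $\ba$ and $\bbeta$ have already merged before reaching $c$, the ``quadrilateral'' $a\,b\,b_i'\,a_i'$ degenerates to a curved triangle with apex on the common arc, and one simply invokes Corollary \ref{cor:tri} (or the triangle version already used in Proposition \ref{prop:quad}) in place of Lemma \ref{lem:quad}; no new idea is needed. Once this exhaustion is in hand, the corollary follows by the limiting argument described above.
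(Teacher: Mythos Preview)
Your proposal is correct and takes essentially the same approach as the paper: reduce the curved triangle to a quadrilateral by inserting an auxiliary upper level curve and invoke Proposition~\ref{prop:quad}. The paper's proof is a single sentence (pick a level curve $\bo$ so that the putative meeting point lies in the resulting quadrilateral, then Proposition~\ref{prop:quad} gives a contradiction); your exhaustion by $\bo_i \nearrow c$ and the accompanying limit argument for the constant-speed assertion simply make explicit what the paper leaves implicit.
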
  

\begin{proof} If two streamlines meet at a point in the triangle we may construct a quadrilateral containing that point by letting $\bo$ be a level curve above $c$. Then Proposition \ref{prop:quad} yields a contradiction.
\end{proof}

\begin{lemma}\label{lem:trilem}   $|\nabla u_\infty (\bs)|$ cannot be  constant on a subarc of $\overline{ab}$, except if  $c \in \partial K.$
\end{lemma}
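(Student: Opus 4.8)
The plan is to argue by contradiction, assuming that $|\nabla u_\infty(\bs)|$ is constant, equal to $C$, on some nondegenerate subarc $\overline{cd}\subset\overline{ab}$, while $c\notin\partial K$. First I would apply Lemma~\ref{lem:constant} to the curved triangle $abc$ viewed as a degenerate quadrilateral: more precisely, pick a level curve $\bo$ strictly between $\bs$ and the level of the vertex $c$, producing a genuine quadrilateral $a\,b\,b'\,a'$ whose lower arc contains the subarc $\overline{cd}$ on which the speed is constant. Lemma~\ref{lem:constant} then forces $|\nabla u_\infty|$ to be constant on the whole sub-quadrilateral spanned by the streamlines through $c$ and $d$ and the level arcs $\bs,\bo$, with all streamlines there being straight segments; in particular the two streamlines issuing from $c$ and $d$ are straight lines with the \emph{same} speed $C$.

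Next I would let the upper level $\bo$ rise toward the vertex $c$ of the triangle. The streamlines emanating from the endpoints of $\overline{cd}$ cannot meet inside the quadrilateral (Lemma~\ref{lem:constant}), so, applying this for a sequence of level curves $\bo_i\to c$ exactly as in the proof of Corollary~\ref{cor:tri}, I obtain that the region of constancy extends all the way up to $c$: the streamline through $d$ remains a straight segment of speed $C$ until it reaches $c$, and similarly the streamline through the other endpoint of the constancy subarc. Since both of these streamlines terminate at the single point $c$ (the meeting point of $\ba$ and $\bbeta$, through which $\bm$ also passes), we have two distinct straight-line segments of equal speed running into the common point $c$ — and in fact a whole fan of such segments, one through each point of $\overline{cd}$, all of speed $C$ and all terminating at $c$.

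The contradiction comes from the Eikonal Equation. In the open curved triangle $c\,d\,(\text{streamline endpoints})$ the speed $|\nabla u_\infty|$ is identically $C$, so $u_\infty$ solves $|\nabla u_\infty| = C$ there in the classical $C^1$ sense. By Proposition~\ref{prop:eikonal} the streamlines in that region are non-intersecting straight segments — but we have just exhibited infinitely many of them all passing through the single point $c$. This is impossible unless $c$ lies on the boundary of the region where the Eikonal description is available, i.e. unless $c\in\partial K$ (the only place a genuine accumulation/termination of streamlines is permitted). Hence $|\nabla u_\infty(\bs)|$ cannot be constant on a subarc of $\overline{ab}$ unless $c\in\partial K$.

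The main obstacle I anticipate is the step of pushing the constancy region all the way up to the vertex $c$ and making the "fan of straight segments through $c$" rigorous: one must be careful that the limiting procedure $\bo_i\to c$ does not degenerate (the sub-quadrilaterals shrink to the point $c$), and that the straight segments through distinct points of $\overline{cd}$ genuinely remain distinct and genuinely all reach $c$ rather than merging with $\ba$ or $\bbeta$ prematurely — but since $\ba,\bbeta$ themselves have speed $C$ near $c$ in this scenario, merging does not dissolve the contradiction, it only makes the non-uniqueness at $c$ more blatant. Everything else is a direct invocation of Lemma~\ref{lem:quad}, Lemma~\ref{lem:constant}, Corollary~\ref{cor:tri} and Proposition~\ref{prop:eikonal}.
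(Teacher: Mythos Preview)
Your approach is correct and is essentially the paper's own (very terse) argument spelled out in detail: build a subregion in which the Eikonal Equation holds, invoke Proposition~\ref{prop:eikonal} to get non-intersecting straight-line streamlines, and observe that this is incompatible with all of them having to funnel into the single apex of the triangle unless that apex lies on $\partial K$ (i.e., outside $G$). One cosmetic point: you use the letter $c$ both for an endpoint of the constant-speed subarc $\overline{cd}\subset\overline{ab}$ and for the apex of the curved triangle $a\,b\,c$; rename the subarc (say $\overline{c_1d_1}$) to avoid the clash.
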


\begin{proof} We can again construct a triangle in which the Eikonal Equation is valid. This yields a contradiction, unless we allow a corner to be outside $G$.
\end{proof}

Vi can again abandon the \emph{strict} monotonicity. 

\begin{cor}[Triangular Rule] \label{cor:triconstant}
Suppose that $|\nabla u_\infty (\bs)|$ is monotone on the arc $\overline{ab}$ of the level curve $\bs$. Then no streamlines can meet inside the triangle. A streamline with initial point on the arc $\overline{ab}$ has constant speed till it meets $\ba$ or $\bbeta$.
\end{cor}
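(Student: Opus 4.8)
The plan is to deduce the Triangular Rule directly from the already established Quadrilateral Rule (Corollary~\ref{cor:constant}), by viewing the curved triangle $abc$ as the limit of curved quadrilaterals obtained by slicing it with level curves $\{u_\infty = s\}$ for $s$ just below the level of the apex $c$. Assume, without loss of generality, that $|\nabla u_\infty(\bs)|$ is non-decreasing along $\overline{ab}$. Since every streamline is parametrised so that $u_\infty$ is weakly increasing along it, the whole closed triangle lies in $\{u_\infty(\bs)\le u_\infty\le u_\infty(c)\}$, and the only point of the closed triangle at the top level $u_\infty(c)$ is $c$ itself, which by construction lies on both $\ba$ and $\bbeta$.

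For the constant-speed assertion, fix a streamline $\bl$ with initial point $x$ on the open arc $\overline{ab}$ (the endpoints $a,b$ are the initial points of $\ba,\bbeta$ themselves and need no argument). For each value $s$ with $u_\infty(\bs)<s<u_\infty(c)$, the level curve $\bo_s=\{u_\infty=s\}$ together with the arcs of $\ba$, $\bbeta$ and $\bs$ bounds a curved quadrilateral $Q_s$ contained in the triangle, and $\ba$ and $\bbeta$ do not meet before reaching $\bo_s$ because their sole meeting point $c$ lies strictly above level $s$. Hence Corollary~\ref{cor:constant} applies to $Q_s$ and shows that $\bl$ has constant speed $|\nabla u_\infty(x)|$ from $x$ onward until it first meets $\ba$, meets $\bbeta$, or reaches $\bo_s$. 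Letting $s\uparrow u_\infty(c)$, the quadrilaterals $Q_s$ exhaust the triangle; since $\bl$ cannot cross $\ba$ or $\bbeta$ (uniqueness of streamlines) yet must eventually attain the level $u_\infty(c)$, it must meet $\ba$ or $\bbeta$ at or before $c$. Therefore $\bl$ has constant speed until it meets $\ba$ or $\bbeta$.

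The absence of interior meeting points follows in the same spirit: if two streamlines met at an interior point $q$ of the triangle, then $u_\infty(q)<u_\infty(c)$, so any $s$ with $u_\infty(q)<s<u_\infty(c)$ places $q$ in the interior of $Q_s$, where Corollary~\ref{cor:constant} forbids meeting points, a contradiction. If $|\nabla u_\infty(\bs)|$ happens to be constant on a subarc of $\overline{ab}$, Lemma~\ref{lem:trilem} forces $c\in\partial K$, so the upper sides of the $Q_s$ approach $\partial K$ but remain at levels $s<1$; Corollary~\ref{cor:constant} (or directly Lemma~\ref{lem:constant}) still applies and nothing changes. The only step requiring real care is the sweeping argument at the apex, namely the claim that a streamline lying in the triangle cannot escape through $c$ without first touching a side; this is precisely the observation recorded in the first paragraph that $c$ is the unique point of the closed triangle at the top level and belongs to both $\ba$ and $\bbeta$, so once $\bl$ reaches that level it has necessarily met one of them.
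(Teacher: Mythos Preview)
Your argument is correct and follows essentially the same route as the paper: reduce the triangle to quadrilaterals by slicing with level curves $\bo_s$ strictly below the apex $c$ and invoke Corollary~\ref{cor:constant} on each slice, then let $s\uparrow u_\infty(c)$. The paper's proof is the one-line ``reason as in the proof of Corollary~\ref{cor:tricor} and apply Corollary~\ref{cor:constant}''; you have simply written out the limiting step and the apex observation in detail (the aside about Lemma~\ref{lem:trilem} is unnecessary, since Corollary~\ref{cor:constant} already covers the non-strict monotone case, but it does no harm).
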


\begin{proof} Reason as in the proof of Corollary \ref{cor:tricor} and apply Corollary \ref{cor:constant}.
\end{proof}

  \section{Polygons}\label{sec:poly}
  Let $\Omega$ be a convex polygon with $N$ vertices $P_1,P_2,\ldots, P_N$ and set $P_{N+1}=P_1$. The gradient $\nabla u_\infty$ is continuous up to the boundary $\partial \Omega$ and especially at the vertices,
  $$|\nabla u_\infty(P_j)|=0, \quad j=1,2,\ldots,N.$$
  From each vertex $P_j$, there is a unique streamline $\bg_j$ that terminates on $K$. They are the attracting streamlines.

Let $M_j$ denote a point on the edge $\overline{P_jP_{j+1}}$ at which $|\nabla u_\infty|$ attains its maximum, i.e,
$$
|\nabla u_\infty(M_j)|=\max_{\overline{P_jP_{j+1}}} |\nabla u_\infty|. 
$$
The point divides the edge $\overline{P_jP_{j+1}}$ into two line segments $\overline{P_jM_j}$ and $\overline{M_jP_{j+1}}$. Denote by $\bm_j$ the streamline starting at the point $M_j$.

\begin{lemma}\label{lem:polymon} The normal derivative
$$
\frac{\partial u_\infty}{\partial n} = |\nabla u_\infty|
$$
is monotone along the half-edges $\overline{P_jM_j}$ and $\overline{M_jP_{j+1}}$ for $j=1,2,\ldots,N$.
\end{lemma}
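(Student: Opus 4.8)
The statement to prove is that $|\nabla u_\infty|$ is monotone along each of the two half-edges $\overline{P_jM_j}$ and $\overline{M_jP_{j+1}}$. The plan is to extract the monotonicity from the quadrilateral/triangular machinery of Section 5, using the crucial boundary facts peculiar to a polygon: the gradient vanishes at the vertices $P_j$, and $|\nabla u_\infty|$ attains its edgewise maximum at $M_j$. First I would fix an edge $\overline{P_jP_{j+1}}$ and consider the streamlines emanating from its interior points; since $\partial \Omega$ is a straight segment here, the boundary arc itself is a "level curve" $\bs$ lying on $\partial\Omega$ in the terminology of Section 4, with lower-level speed profile $t\mapsto|\nabla u_\infty(\bs(t))|$. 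Take $\bo$ to be a level curve $\{u_\infty = c\}$ for small $c>0$; together with two streamlines it bounds a curved quadrilateral (or triangle, if they meet) sitting inside $G$.

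The heart of the argument is a proof by contradiction against non-monotonicity. Suppose $|\nabla u_\infty|$ fails to be monotone on, say, $\overline{P_jM_j}$. Then there are three points $x_1, x_2, x_3$ in order along the half-edge with $|\nabla u_\infty(x_1)| < |\nabla u_\infty(x_2)|$ and $|\nabla u_\infty(x_2)| > |\nabla u_\infty(x_3)|$, i.e. a strict interior local maximum of the speed profile at some $x_2 \neq M_j$, or the symmetric situation with a strict interior local minimum. In the local-maximum case, apply Lemma \ref{lem:quad} (or Corollary \ref{cor:tri}) to the curved quadrilateral whose lower side is the subarc $\overline{x_1 x_3}$ of $\partial\Omega$, whose lateral sides are the streamlines through $x_1$ and $x_3$, and whose upper side is a level curve $\bo$: the maximal speed on that subarc equals $|\nabla u_\infty(x_2)|$, and the streamline $\bm$ through $x_2$ carries this constant speed $M$ up to $\bo$, forcing $|\nabla u_\infty(x_1)| = |\nabla u_\infty(x_2)|$ by Lemma \ref{lem:ll} applied to the sub-quadrilateral with lower side $\overline{x_1 x_2}$ — contradiction with strictness. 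For an interior strict local minimum of the profile one argues symmetrically, using that $|\nabla u_\infty| \le 1$ on $\partial\Omega$ together with Lemma \ref{lem:ll} in the form that the maximal speed can only decrease going up, so a dip in the middle is incompatible with the speed being non-decreasing along streamlines (Theorem \ref{thm:speed}). The roles of $P_j$ and $M_j$ enter precisely here: the endpoint values of the profile on $\overline{P_jM_j}$ are $0$ at $P_j$ and the edge-maximum $M$ at $M_j$, which rules out the speed profile both starting and ending low (so there is no room for an interior strict minimum) and pins the edge-maximum at $M_j$ (so no interior strict maximum). This forces the profile to be monotone on $\overline{P_jM_j}$, and by the same reasoning (reading the edge from $P_{j+1}$) on $\overline{M_jP_{j+1}}$.

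The main obstacle I anticipate is handling the degenerate geometry near the vertex $P_j$, where $|\nabla u_\infty(P_j)| = 0$ and the streamline through $P_j$ is the attracting streamline $\bg_j$: one must check that the curved quadrilaterals one builds near $P_j$ genuinely lie in $G$ (their lateral streamlines do not collapse or escape $\overline\Omega$), and that the streamline through an interior point $x$ close to $P_j$ is well-defined and joins $\bg_j$ only at or above $\bo$, so that Lemma \ref{lem:quad} applies without the two lateral sides coinciding prematurely. This is where the caution about initial points with zero gradient (flagged in Section 2) has to be invoked, together with the convexity of the level curves $\{u_p = c\}$ from Lewis's theorem (property 1 in Section 2), which passes to the limit and guarantees the quadrilaterals are genuine. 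A secondary, more routine point is to make sure the level curve $\bo$ can be chosen so that the streamlines through the finitely many test points $x_i$ all reach it before meeting each other or $\partial K$; choosing $c$ small enough handles this, since near $\partial\Omega$ the streamlines are close to the inward normals and do not yet interact.
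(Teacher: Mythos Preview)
Your approach has a genuine gap: the quadrilateral machinery of Section~5 does not deliver the contradiction you claim. In your local-maximum case you apply Lemma~\ref{lem:ll} to the sub-quadrilateral over $\overline{x_1x_2}$ and assert this ``forces $|\nabla u_\infty(x_1)| = |\nabla u_\infty(x_2)|$''. It does not. Lemma~\ref{lem:ll} only says the maximum of $|\nabla u_\infty|$ on the upper arc is at most the maximum on the lower arc; since the lower maximum is $|\nabla u_\infty(x_2)|$ and the streamline $\bm$ carries exactly that value up to $\bo$, you recover an equality of the two \emph{maxima}, but nothing whatsoever about the value at $x_1$. More generally, Lemmas~\ref{lem:ll} and~\ref{lem:quad} constrain what happens \emph{inside} a quadrilateral once the profile on $\bs$ is given; they place no restriction on the shape of that profile itself, so one cannot bootstrap monotonicity of $|\nabla u_\infty(\bs)|$ out of them. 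Your endpoint argument is also a non sequitur: a continuous function on $\overline{P_jM_j}$ with values $0$ and the edge-maximum at the endpoints can perfectly well have interior strict local maxima and minima (just not the global edge-maximum in the interior), so the boundary values $0$ and $M$ alone rule out nothing.

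The paper's proof is entirely different and does not use Section~5 at all; in fact the logic runs the other way, with Lemma~\ref{lem:polymon} feeding the monotonicity hypothesis that the Quadrilateral/Triangular Rule needs. The paper places the edge on the $x_1$-axis and exploits the convexity of the level curves $\{u_\infty=c\}$ directly: the closest point of $\{u_\infty=c\}$ to the edge sits over some abscissa $x_1(c)\to\xi$ as $c\to 0$, and convexity forces the vertical heights $h^j$ of the level curve over points $\xi_1<\xi_2<\xi$ to satisfy $h_1^j\ge h_2^j$. Since both numerators $u_\infty(\xi_i,h_i^j)-u_\infty(\xi_i,0)$ equal $c_j$, the difference quotients are ordered, and letting $c_j\to 0$ gives $|\nabla u_\infty(\xi_1,0)|\le|\nabla u_\infty(\xi_2,0)|$. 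This elementary convexity/difference-quotient argument is what is missing from your plan.
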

\begin{proof}
 We arrange it so that the polygon is in the upper half-plane $x_2>0$ and the edge in question is on the $x_1$-axis, say the edge is
 $$a\leq x_1\leq b, \quad x_2=0.$$
 The convex level curves
 $$
 \{u_\infty=c\}
 $$
 approach the $x_1$-axis as $c\to 0$. The shortest distance from the level curve to the edge is attained at some point, say $(x_1(c), x_2(c))$. Choose a sequence $c_j\to 0$ so that $x_1(c_j)\to \xi$ and $x_2(c_j)\to 0$, where $(\xi,0)$ is some point, $a\leq\xi\leq b$ (in fact, $a < \xi < b$).  If $\xi >a$, let $a<\xi_1<\xi_2<\xi$ and keep $j$ so large that $\xi_2<x_1(c_j)$. The vertical lines $x_1=\xi_1$ and $x_1=\xi_2$ intersect the level curve $\{u_\infty = c\}$ at the points $(\xi_1,h_1^j)$ and $(\xi_2, h_2^j)$, i.e.
 $$
 u_\infty(\xi_1,h_1^j)= u_\infty(\xi_2,h_2^j)=c_j.
 $$
 The convexity of the level curve implies that $h_1^j\geq h_2^j$. (The chord between $(\xi_1,h_1^j)$ and $(x_1(c_j), x_2(c_j))$ must lie inside the set $\{u_\infty\geq c\}$.)
 It follows that the difference quotients in the normal direction satisfy
 $$
 \frac{u_\infty(\xi_1,h_1^j)-u_\infty(\xi_1,0)}{h_1^j}\,\leq \, \frac{u_\infty(\xi_2,h_2^j)-u_\infty(\xi_2,0)}{h_2^j},
 $$
 since both numerators are $= c_j-0$.
As $c_j\to 0$, also $h_1^j\to 0$ and $h_2^j\to 0$. By passing to the limit we obtain 
$$
|\nabla u_\infty(\xi_1,0)|\,\leq\, |\nabla u_\infty(\xi_2,0)|,\quad \xi_1<\xi_2<\xi
$$
as desired.

If $a<\xi<b$ we also obtain the reverse inequality for all $\xi<\xi_1<\xi_2< b$ so that we may conclude the desired result again. It also follows that $(\xi,0)$ is the $M_j$ point of this edge. This excludes that $\xi = a$ or $\xi = b$.
\end{proof}

  We are now ready to prove our main theorem for polygons. 
  
  \begin{proof}[Proof of Theorem \ref{thm:mainpoly}.] Consider the region bounded by $\overline{P_jP_{j+1}}, \bg_j, \bg_{j+1}$ and, if  $\bg_j$ does not meet $ \bg_{j+1}$ also $\partial K$. This can be either a curved triangle (meeting attracting streamlines) or a quadrilateral (the attracting streamlines do not meet). By Lemma \ref{lem:polymon}, $|\nabla u_\infty|$ is monotone along $\overline{P_jM_{j}}$ and $\overline{M_jP_{j+1}}$. Therefore, Corollary \ref{cor:constant} (in the case of a quadrilateral) and Corollary \ref{cor:triconstant} (in the case of a curved triangle) imply that no streamlines can meet (on either side of $\mu_j$)
    and that they have  constant speed until they meet $\bg_j$ or $\bg_{j+1}$, or hit $\partial K.$
  
  \end{proof}
  




  \section{General Domains}  \label{sec:general} In this section we assume that $\nabla u_\infty$ is continuous in $\overline \Omega\setminus K$  
   and that $|\nabla u_\infty|$ has a finite number of local minimum points and maximum points. Denote by $P_1, \ldots, P_N$ (with $P_{N+1}=P_1$ as before) the minimum points. From each $P_j$, there is a unique streamline $\bg_j$ that terminates in $K$. These streamlines divide $G$ into triangles with corners $P_k, P_k$ and $Q_k$ if $\bg_k$ and $\bg_{k+1}$ meet at $Q_k$, and quadrilateras with corners  $P_k, P_{k+1}, S_{k+1}$ and $S_{k}$ if $\bg_k$ and $\bg_{k+1}$ do not meet but they reach $K$ at the points $S_k$ and $S_{k+1}$. Recall the $\infty$-ridge, 
  $$
  \Gamma = \bigcup_{k=1}^N \{\bg_k(t), \quad 0\leq T\leq T_k\}.
  $$
  
   We give the proof of Theorem \ref{thm:general}.

  \begin{proof}[Proof of Theorem \ref{thm:general}.] Consider the region bounded by $\overline{P_jP_{j+1}}, \bg_j, \bg_{j+1}$ and perhaps $\partial K$. This can be either a curved triangle or  quadrilateral. By construction, $|\nabla u_\infty|$ is monotone along $\overline{P_jM_{j}}$ and $\overline{M_jP_{j+1}}$. Therefore, Corollary \ref{cor:constant} in the case of a quadrilateral and Corollary \ref{cor:triconstant} in the case of a curved triangle imply that no streamlines can meet (on either side of $\mu_j$) and that they are constant until they meet $\bg_j$ or $\bg_{j+1}$ or reach $\partial K$.
  \end{proof}

  \section{Appendix: Estimates of Derivatives of $|\nabla u_p|$}
  
  The fundamental properties
\[\tag{\textbf{I}}\label{eq:eqI}\iint_D |\nabla u_p-\nabla u_\infty|^2 \, dx_1 dx_2\to 0,\quad \text{as }p\to \infty,\]
\begin{equation}\tag{\textbf{J}}\label{eq:eqJ}\iint_D |\nabla (|\nabla u_p|^2)|^2 \, dx_1 dx_2\leq M_D<\infty,\end{equation}
for all (large) $p$
used in Section \ref{sec:eqcont} follow directly from \cite{KZZ}, where the corresponding estimates are ingeniously derived for the solution $u^\e$ of
$$
\Delta_\infty u^\e +\e\Delta u^\e  = 0.
$$
To transcribe the work to the solution $u_p$ of the $p$-Laplace equation 
$$
  \Delta_\infty u_p +\frac{1}{p-2}|\nabla u_p|^2\Delta u_p = 0
  $$
  one has to replace the constant factor $\e$ by the \emph{function} $|\nabla u_p|^2/(p-2)$ \emph{under} the integral sign. Below we give just a synopsis of the procedure, referring to the numbering of formulas and theorems in \cite{KZZ}. (The reader is supposed to have access to \cite{KZZ}.)
  
  Formula (2.5) in \cite{KZZ} becomes
  $$
  -\det(D^2 u_p) = |\nabla |\nabla u_p||^2+\frac{1}{p-2}(\Delta u_p)^2.
  $$
  Formula (2.7) becomes
  $$
  I_p(\phi)=\iint_U |\nabla |\nabla u_p||^2 \phi\, dx_1 dx_2 +\frac{1}{p-2}\iint_U (\Delta u_p)^2\phi \,dx_1 dx_2
  $$
  and (2.8)
  $$
  I_p(\phi)=\frac12\iint_U \Bigl(\Delta u_p\langle\nabla u_p, \nabla \phi\rangle- \sum_{i,j=1}^2\frac{\partial^2 u_p}{\partial x_i\partial x_j}\frac{\partial u_p}{\partial x_j}\frac{\partial \phi}{\partial x_i}  \Bigr)  \, dx_1 dx_2 .
  $$
  Lemma 5.1 is needed only for $\alpha = 2$ (and since $|\nabla u_p|\neq 0$ we can put $\kappa = 0$ in the proof). It becomes
\[
\begin{split}
&    \iint_U |\nabla |\nabla u_p|^2|^2\xi^2 \, dx_1 dx_2 +\frac{1}{p-2}\iint_U |\nabla u_p|^2(\Delta u_p)^2\xi^2 \,dx_1 dx_2\\
&\leq C(2)\iint_U|\nabla u_p|^4\left(|\nabla \xi|^2+|\xi||D^2\xi|\right) \, dx_1 dx_2.
\end{split}
\]
This yields Lemma 2.6 and the desired property \eqref{eq:eqJ}, since $|\nabla u_p|$ is locally bounded by Lemma \ref{lem:gradbound}. 

Lemma 5.2 is valid with no changes (replace $u^\e$ with $u_p$), but the proof uses Lemma 5.1 as above. Then Lemma 5.2 implies the flatness estimate in Lemma 2.7:
\[
\begin{split}
&\fiint_{B_r(x)}\left(|\nabla u_p|^2-\langle \nabla P, \nabla u_p\rangle\right)^2 \, dx_1 dx_2\leq C\left(\fiint_{B_{2r}(x)} |\nabla u_p|^4\, dx_1 dx_2\right)^\frac12 \\
&\times \left(\fiint_{B_{2r}(x)}\left(\frac{|u_p-P|^2}{r^2}\left(|\nabla P|+|\nabla u_p|\right)^2+\frac{|u_p-P|^4}{r^4}\right) dx_1 dx_2 \right)^\frac12
\end{split}
\]
valid for any linear function $P$. This estimate is needed for the proof of Theorem 1.4, when one has to identify the limit of $|\nabla u_p|^2$ in $L^2_\text{loc}$ as $|\nabla u_\infty|^2$. Theorem 1.4 contains our desired property \eqref{eq:eqI}.

 \bigskip
\paragraph{Acknowledgments:} Erik Lindgren was supported by the Swedish Research Council, 2017-03736. Peter Lindqvist was supported by The Norwegian Research Council, grant no. 250070 (WaNP).
\bigskip

\noindent {\textsf{Erik Lindgren\\  Department of Mathematics\\ Uppsala University\\ Box 480\\
751 06 Uppsala, Sweden}  \\
\textsf{e-mail}: erik.lindgren@math.uu.se\\

\noindent \textsf{Peter Lindqvist\\ Department of
   Mathematical Sciences\\ Norwegian University of Science and
  Technology\\ N--7491, Trondheim, Norway}\\
\textsf{e-mail}: peter.lindqvist@ntnu.no

\end{document}